\documentclass{article}

\usepackage{typearea}
\typearea{8}
\usepackage{amsmath}


\usepackage{amssymb}
\usepackage{graphicx} 
\usepackage{wrapfig}
\usepackage{here}
\usepackage{amsthm}
\usepackage{cases}
\usepackage{url}

\newtheorem{dfn}{Definition}
\newtheorem{lem}{Lemma}
\newtheorem{thm}{Theorem}
\newtheorem{cor}{Corollary}

\newtheorem{rmk}{Remark}
\newtheorem{prop}{Proposition}

\title{Singular function emerging from one-dimensional elementary cellular automaton Rule $150$}
\author{Akane Kawaharada\footnote{E-mail: aka@kyokyo-u.ac.jp, Postal address: 1, Fujinomoricho, Fukakusa, Fushimi-ku, Kyoto-shi, Kyoto, 612-8522, Japan} \vspace{2mm}\\
Department of Mathematics, Kyoto University of Education}

\begin{document}

\maketitle

\begin{abstract}
In this paper, we give a singular function on the unit interval derived from the dynamic of the one-dimensional elementary cellular automaton Rule $150$.
We describe properties of the resulting function, that is strictly increasing, uniformly continuous, and differentiable almost everywhere, and 
we show that it is not differentiable at dyadic rational points.
We also give functional equations that the function satisfies, 
and show that the function is the only solution of the functional ones.
\end{abstract}

\hspace{2.5mm} {\it Key words} : cellular automaton, fractal, singular function

\section{Introduction}

There exist many pathological functions. 
The Weierstrass function and the Takagi function, for example,   
are real-valued functions that are continuous everywhere but nowhere differentiable  \cite{weier1872, takagi1903}. 
Generalized results of the Takagi function were given in \cite{hatayama1984}.
Okamoto's function is a one-parameter family of self-affine functions whose differentiability is determined by the parameter; it is differentiable almost everywhere, non-differentiable almost everywhere, or nowhere differentiable \cite{okamoto2005, okamoto2007, kobayashi2009}.
A singular function is defined by monotonically increasing (or decreasing), continuous everywhere,  and has zero derivative almost everywhere. 
The Cantor function is an example of a singular function \cite{cantor1884}, 
that is also referred to as the Devil's staircase, and there are infinite number of steps in $[0,1]$ while it is constant most of them.
Salem's function is a self-affine function, that is another example of a singular function \cite{salem1943, derham1957, ulam1934, yhk1997}.
There are several works discussed the relationship between the function and cellular automata.
For the one-dimensional elementary cellular automaton Rule $90$ the limit set is characterized by Salem's function \cite{kawanami2014}, and
for a two-dimensional automaton that is a mathematical model of a crystalline growth the limit set is also characterized by Salem's one (numerical result was given in \cite{kawa2014a}, proofs were given in \cite{kawanami2017, kawanami2019}).
In the case of these previous works, the number of nonzero states in a spatial or spatio-temporal pattern of a cellular automaton is represented by functional equations that are equal to functional ones of Salem's singular function.

In this paper, we provide a new singular function by the elementary cellular automaton Rule $150$.
Figure~\ref{fig:150up} shows a spatio-temporal pattern of Rule $150$ from time step $0$ to $31$, and Figure~\ref{fig:150lim} shows its limit set.
In this case, the number of nonzero states in the spatial pattern can not be represented by simple functional equations, and we can not use the same constructing method of the previous works.
For Rule $150$ the authors have calculated the number of nonzero states in the spatial and spatio-temporal pattern \cite{kawanami2020}.
Thus, by normalizing and limiting the dynamic of the numbers, we provide a size of a self-similar set, and write down the function by an infinite sum of the sizes of the self-similar sets.
We show that the resulting function is a singular function, and the function is not differentiable at dyadic rational points.
Functional equations that the function satisfies are also given.

The remainder of the paper is organized as follows. 
Section \ref{sec:pre} describes the preliminaries concerning the cellular automaton Rule $150$ and the number of nonzero states in its spatial and spatio-temporal patterns. 
In Section \ref{sec:main}, we provide a definition of the given function and write it down by using self-similarities of the spatio-temporal pattern of Rule $150$. 
We show that the resulting function is a singular function, and 
give functional equations that the function satisfies.
Lastly, Section \ref{sec:cr} discusses the findings of this paper and describes possible areas for future studies.

\section{Preliminaries}
\label{sec:pre}

In this section, we present some definitions and notations for elementary cellular automata and their limit sets.
We also provide an overview of previous results about the number of nonzero states in spatial or spatio-temporal patterns of cellular automata.

\subsection{One-dimensional elementary cellular automaton Rule 150 and its limit set}
 
Let $\{0,1\}$ be a binary state set and $\{0,1\}^{\mathbb Z}$ be the one-dimensional configuration space.
Suppose that $(\{0,1\}^{\mathbb Z}, T)$ is a discrete dynamical system consisting of a space $\{0,1\}^{\mathbb Z}$ and a transformation $T$ on $\{0,1\}^{\mathbb Z}$.
The $n$-th iteration of $T$ is denoted by $T^n$. 
Thus, $T^0$ is the identity map.

\begin{dfn}
{\bf A one-dimensional elementary cellular automaton} $(\{0, 1\}^{\mathbb Z}, T)$ is given by 
\begin{align}
(T x)_{i} = f(x_{i-1}, x_i, x_{i+1}) 
\end{align}
for $i \in {\mathbb Z}$ and $x = \{x_i\}_{i \in {\mathbb Z}} \in \{0, 1\}^{\mathbb Z}$,  
where $f : \{0,1\}^{3} \to \{0,1\}$ is a map depending on the nearest three states. 
We call $f$ a local rule of $T$. 
\end{dfn}
This is the simplest nontrivial cellular automaton. 
This class includes $256$ automata, referred to by the Wolfram code from Rule $0$ to Rule $255$.
For each state $x_i$ ($i \in {\mathbb Z}$), the next state $(T x)_i$ is determined by the nearest three states $(x_{i-1}, x_i ,x_{i+1})$. 
In the case of Table \ref{tab:1dECA}, the Wolfram code is Rule $150$, 
because $1 \cdot 2^7 + 0 \cdot 2^6 + 0 \cdot 2^5 + 1 \cdot 2^4 + 0 \cdot 2^3 + 1 \cdot 2^2 + 1 \cdot 2^1 + 0 \cdot 2^0 = 150$.
{\bf The local rule of Rule $150$} $(\{0, 1\}^{\mathbb Z}, T_{150})$ is also given by
\begin{align}
\label{eq:1d150}
(T_{150} x)_i &= x_{i-1} + x _{i} +x_{i+1} \quad \mbox{(mod $2$)},
\end{align} 
for $x \in \{0,1\}^{\mathbb Z}$.
The local rules given in Table~\ref{tab:1dECA} and Equation~(\ref{eq:1d150}) are mathematically equal.

\begin{table}[htb]
\caption{Local rule of Rule $150$}
\begin{center}
\begin{tabular}{c | c c c c c c c c}
\hline 
$x_{i-1} x_i x_{i+1}$ & $111$ & $110$ & $101$ & $100$ & $011$ & $010$ & $001$ & $000$ \\ 
\hline  
$(T_{150} x)_i$ & $1$ & $0$ & $0$ & $1$ & $0$ & $1$ & $1$ & $0$ \\
\hline
\end{tabular}
\label{tab:1dECA}
\end{center}
\end{table}

The configuration $x_o \in \{0,1\}^{\mathbb Z}$ is called {\bf the single site seed}, wherein  
\begin{align}
({x_o})_i = \left\{
\begin{array}{ll}
1 & \mbox{if $i = 0$},\\
0 & \mbox{if $i \in {\mathbb Z} \backslash \{ 0 \}$}.
\end{array}
\right. 
\label{eq:sss}
\end{align} 
Figure \ref{fig:150up} shows the orbit of Rule $150$ from the single site seed $x_o$ as an initial configuration until time step $2^5-1$.

\begin{figure}[htbp]
\begin{minipage}{0.5\hsize}
\begin{center}
\includegraphics[width=.9\linewidth]{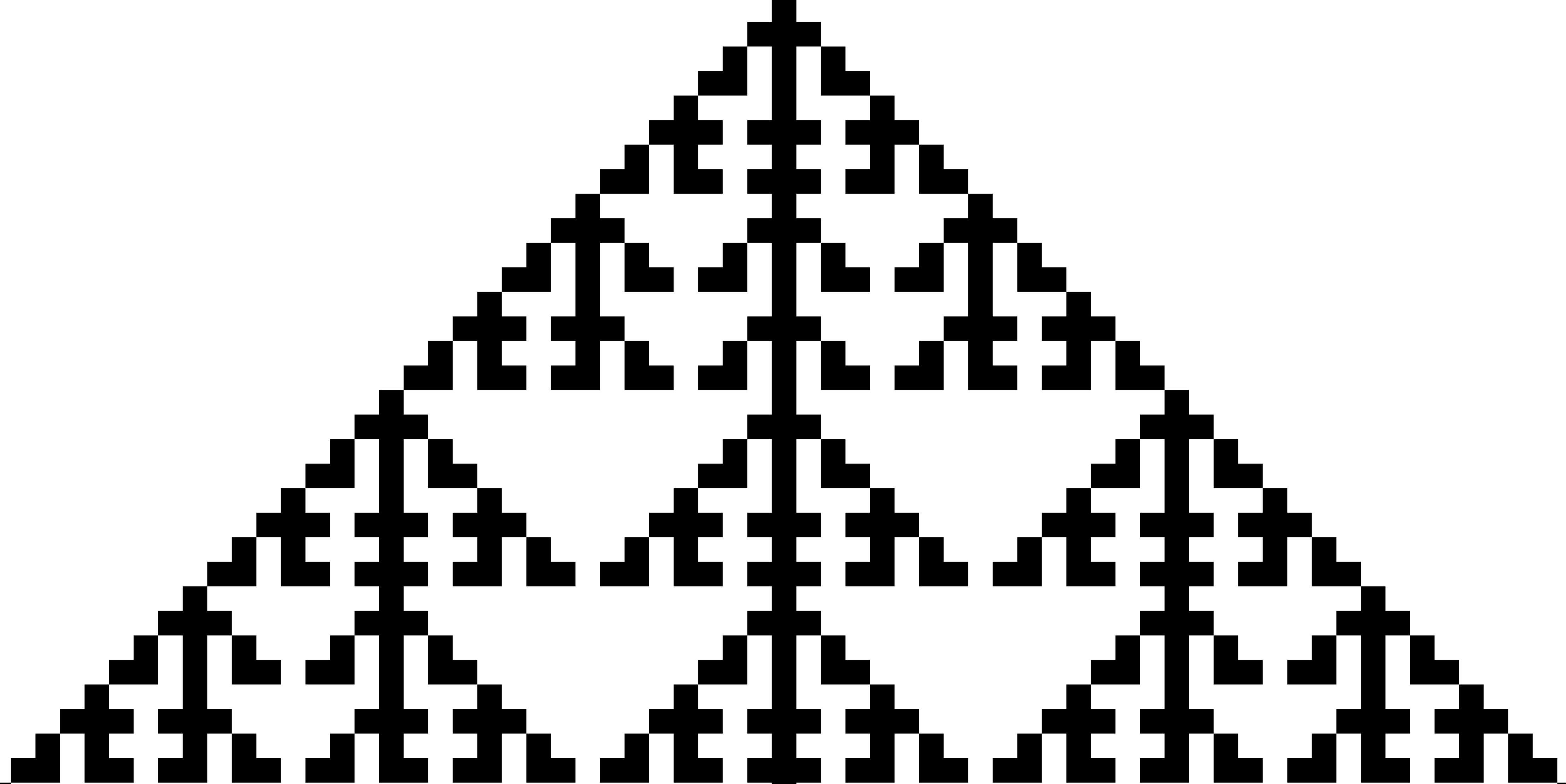}
\end{center}
\caption{Spatio-temporal pattern of Rule $150$, $\{T_{150} x_o\}_{n=0}^{31}$}
\label{fig:150up}
\end{minipage}
\begin{minipage}{0.5\hsize}
\begin{center}
\includegraphics[width=.9\linewidth]{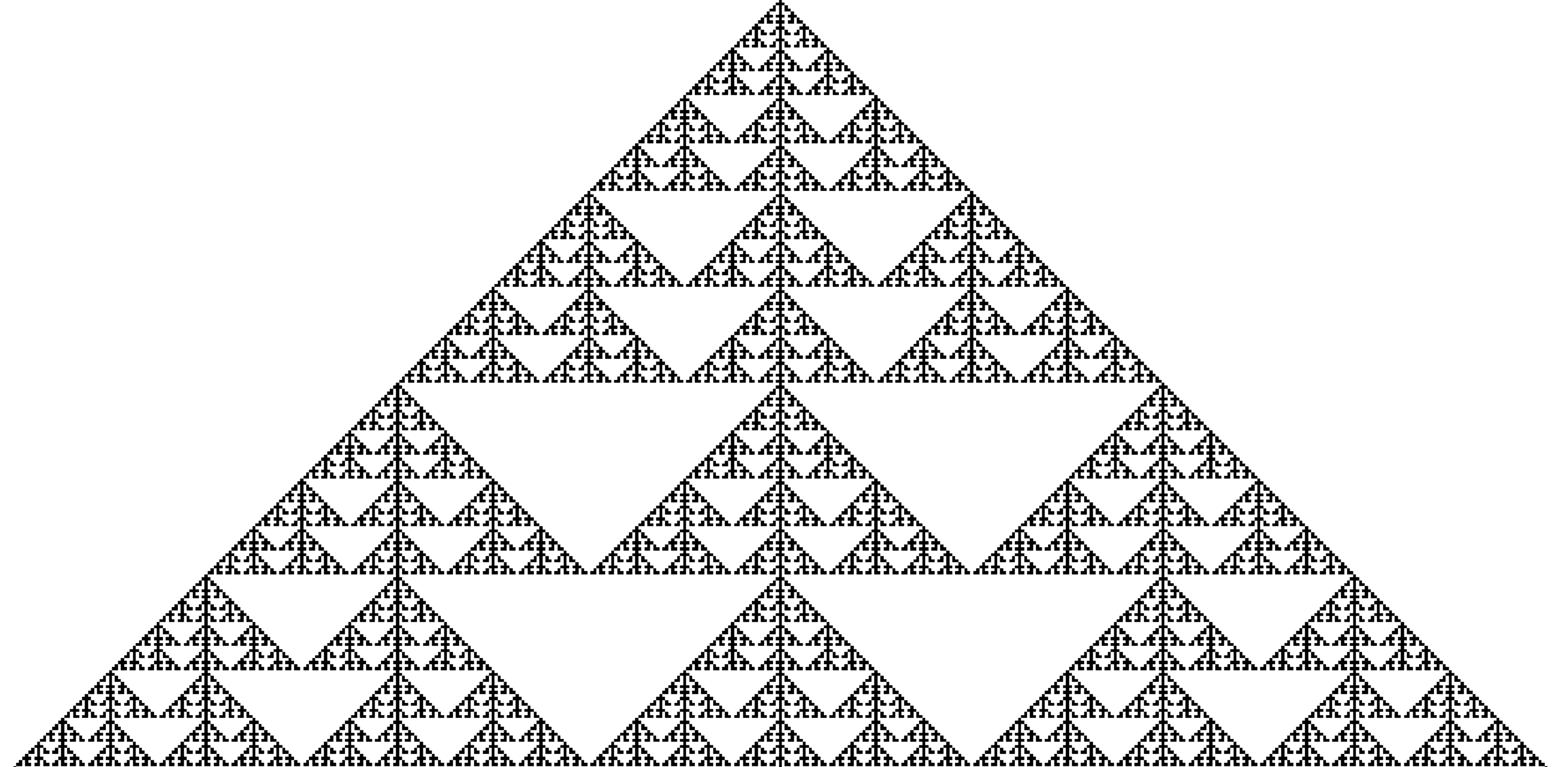}
\end{center}
\caption{Limit set of Rule $150$ from the single site seed $x_o$}
\label{fig:150lim}
\end{minipage}
\end{figure}

Suppose that $\{ T^n x_o \}$ is a dynamic of a cellular automaton from the single site seed, and a subset of a two-dimensional Euclidean space $S(n)$ is given by 
\begin{align}
S(n) = \{ (i, t) \in {\mathbb Z}^2 \mid (T^t x_o)_i > 0, 0 \leq t \leq n \},
\end{align}
that consists of nonzero states from time step $0$ until $n$.
{\bf A limit set} of a cellular automaton is defined by 
$\lim_{n \to \infty} (S(n)/n)$,
if it exists, where $S(n)/n$ is a contracted set of $S(n)$ with a contraction rate of $1/n$.
Before evaluating the limit, $S(n)/n$ for finite $n$ is called {\bf a prefractal set} if the limit set exists. 
For limit sets of linear cellular automata the following two theorems have been shown.

\begin{thm}[\cite{takahashi1992}]
\label{thm:tkhs01}
Consider a $p^m$-state linear cellular automaton ($p$ is a prime number, $m \in {\mathbb Z}_{>0}$). 
If $p^{m-1}$ divides time step $n$, then $(T^{pn} x_o)_{pi} = (T^n x_o)_i$. 
If $p^m$ divides $n$ and at least one of the elements of $i$ is indivisible by $p$, 
then $(T^n x_o)_i$ equals $0$.
\end{thm}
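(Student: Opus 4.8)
The plan is to pass to the generating-function (polynomial) representation of linear cellular automata, under which the iteration $T^n$ becomes multiplication by a fixed polynomial raised to the $n$-th power. Encoding a configuration $\{x_i\}$ as the Laurent polynomial $X(z) = \sum_i x_i z^i$ (with the multi-index monomial $z^i = z_1^{i_1}\cdots z_d^{i_d}$), linearity of the local rule means that $Tx$ corresponds to $a(z)\,X(z)$ modulo $p^m$ for a fixed Laurent polynomial $a(z)$ read off from the rule. Since the single site seed $x_o$ corresponds to the constant polynomial $1$, the orbit satisfies $T^n x_o \leftrightarrow a(z)^n$ modulo $p^m$, so that $(T^n x_o)_i$ is exactly the coefficient $[z^i]\,a(z)^n$ taken in $\mathbb Z/p^m\mathbb Z$. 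Both assertions then reduce to the single congruence: if $p^{m-1}\mid n$ then $a(z)^{pn} \equiv a(z^p)^{\,n} \pmod{p^m}$, where $z^p = (z_1^p,\dots,z_d^p)$; that is, modulo $p^m$, the polynomial $a(z)^{pn}$ is the substitution $z \mapsto z^p$ applied to $a(z)^n$.

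The heart of the argument is a Frobenius-type lifting lemma: for a prime $p$ and integer polynomials $u,v$, one has $u \equiv v \pmod{p^j} \Rightarrow u^p \equiv v^p \pmod{p^{j+1}}$. I would prove this by writing $u = v + p^j w$ and expanding $u^p$ with the binomial theorem, using that $p \mid \binom{p}{\ell}$ for $0 < \ell < p$; this is precisely where primality of $p$ enters. Starting from the base case $a(z)^p \equiv a(z^p) \pmod p$ (ordinary Frobenius, valid since $a_i^p \equiv a_i \pmod p$ and $(z^i)^p = (z^p)^i$), I would then iterate the lemma to obtain
\begin{align}
a(z)^{p^k} \equiv a(z^p)^{\,p^{k-1}} \pmod{p^k} \qquad (k \ge 1),
\end{align}
by raising the $k$-th congruence to the $p$-th power and applying the lifting lemma to improve the modulus from $p^k$ to $p^{k+1}$.

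From here both parts follow by coefficient comparison. For the first part I would write $n = p^{m-1} s$, so that $pn = p^m s$ and $a(z)^{pn} = (a(z)^{p^m})^s$; applying the displayed congruence with $k = m$ and then raising to the $s$-th power (which preserves congruence modulo $p^m$) gives $a(z)^{pn} \equiv a(z^p)^{\,p^{m-1} s} = a(z^p)^{\,n} \pmod{p^m}$. Since $a(z^p)^n$ is the coefficientwise reindexing $j \mapsto pj$ of $a(z)^n$, comparing coefficients yields $[z^{pi}]\,a(z)^{pn} = [z^i]\,a(z)^n$, i.e. $(T^{pn} x_o)_{pi} = (T^n x_o)_i$. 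For the second part I would write $n = p^m t$; the same computation gives $a(z)^n \equiv a(z^p)^{\,p^{m-1} t} \pmod{p^m}$, which is a polynomial in $z^p$ alone, so every coefficient $[z^i]$ with some component $i_\ell$ not divisible by $p$ must vanish modulo $p^m$, i.e. $(T^n x_o)_i = 0$.

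I expect the main obstacle to be the lifting lemma and its iteration, since this is exactly where primality is indispensable and where one must carefully track the gain of a single power of $p$ in the modulus at each step. The two outer exponentiations (to the $s$-th and $t$-th powers) are elementary, but it is important to keep them separate from the delicate $p$-th-power step: raising to an exponent coprime to $p$ preserves but does not improve the modulus, which is precisely why the first assertion needs only $p^{m-1}\mid n$ while the vanishing assertion requires the stronger $p^m \mid n$.
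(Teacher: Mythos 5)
Your argument is correct, but note that the paper itself offers nothing to compare it against: Theorem~\ref{thm:tkhs01} is imported verbatim from \cite{takahashi1992} and is never proved in this paper, so the only meaningful comparison is with the cited literature, where the standard proof is in fact the same polynomial (generating-function) argument you give. Your chain of reductions is sound: the encoding $(T^n x_o)_i = [z^i]\,a(z)^n \pmod{p^m}$ is exactly what linearity and the single site seed provide; the Frobenius lifting lemma $u \equiv v \pmod{p^j} \Rightarrow u^p \equiv v^p \pmod{p^{j+1}}$ is valid for $j \ge 1$ (the boundary term $(p^j w)^p$ carries valuation $jp \ge j+1$ precisely because $j\ge 1$ and $p\ge 2$, which is worth stating explicitly); the induction $a(z)^{p^k} \equiv a(z^p)^{p^{k-1}} \pmod{p^k}$ then goes through, and the two outer exponentiations by $s$ and $t$ only need that congruence mod $p^m$ is preserved under ring operations. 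Your closing remark correctly isolates why the two hypotheses differ in strength: the coefficient-reindexing identity needs the full $p$-th power step one more time (hence $pn$ divisible by $p^m$, i.e.\ $p^{m-1}\mid n$), while the vanishing statement needs $a(z)^n$ itself, not $a(z)^{pn}$, to be congruent to a polynomial in $z^p$, which forces $p^m \mid n$. The one cosmetic gap is that you should fix the convention relating the local rule to $a(z)$ (namely $(Tx)_i = \sum_j c_j x_{i+j}$ corresponds to multiplication by $a(z)=\sum_j c_j z^{-j}$, or the mirror convention), so that the identification $T^n x_o \leftrightarrow a(z)^n$ is literally an identity of coefficients rather than an assertion; this is routine and does not affect correctness.
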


\begin{thm}[\cite{takahashi1992}]
\label{thm:tkhs02}
For a $p^m$-state linear cellular automaton ($p$ is a prime number, $m \in {\mathbb Z}_{>0}$) 
its limit set $\lim_{k \to \infty} (S(p^k-1)/p^k)$ exists.
\end{thm}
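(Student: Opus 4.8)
The plan is to realize the limit set as the attractor of a self-similar construction and to prove convergence in the Hausdorff metric. Let $\mathcal{K}$ denote the space of nonempty compact subsets of a fixed bounded region, equipped with the Hausdorff metric $d_H$; this space is complete. The ambient region is supplied by the light-cone bound: since the local rule has radius $1$, a single site seed can influence cell $i$ at time $t$ only when $|i| \le t$, so every $S(p^k-1)$ lies in $\{(i,t): |i|\le t \le p^k-1\}$, and after contraction by $1/p^k$ every prefractal $F_k := S(p^k-1)/p^k$ lies in the fixed triangle $\Delta = \{(x,y): |x| \le y \le 1\}$. All the objects therefore live in one compact arena, and the goal reduces to showing that the sequence $(F_k)$ is Cauchy in $(\mathcal{K}, d_H)$.

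The engine of the argument is Theorem~\ref{thm:tkhs01}, which I would read as a self-similarity statement at the level of sets. After grouping the time axis into blocks of length $p^{m-1}$ so that the divisibility hypotheses apply, the first assertion $(T^{pn}x_o)_{pi} = (T^n x_o)_i$ says that the dilation $(i,t)\mapsto (pi, pt)$ carries the nonzero cells of $S(p^k-1)$ onto a sub-pattern of $S(p^{k+1}-1)$; combined with the copies produced by the linear superposition structure of the rule, this exhibits $S(p^{k+1}-1)$ as a union of $1/p$-scaled copies of $S(p^k-1)$. The second assertion, that $(T^n x_o)_i = 0$ whenever $p^m \mid n$ and some coordinate of $i$ is prime to $p$, guarantees that no \emph{extra} nonzero cells appear off this coarse lattice, so the union is not polluted by spurious points. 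Passing to the contracted sets, these facts combine into an approximate fixed-point relation
\begin{align}
F_{k+1} = \Phi(F_k) \cup E_k, \qquad \Phi(A) = \bigcup_{j} \phi_j(A),
\end{align}
where each $\phi_j$ is an affine contraction of ratio $1/p$ and $E_k$ is an error set, supported near the top boundary $y=1$, coming from the finite truncation.

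With this recursion in hand I would invoke the contraction mapping principle. The operator $\Phi$ is a union of finitely many $1/p$-contractions, hence a contraction of ratio $1/p$ on $(\mathcal{K}, d_H)$ by Hutchinson's theorem, so it has a unique attractor $F$ with $d_H(\Phi(A),\Phi(B)) \le \tfrac{1}{p}\, d_H(A,B)$ for all $A,B$. Estimating the discrepancy shows that the truncation error satisfies $d_H(F_{k+1}, \Phi(F_k)) \le C/p^{k}$, since $E_k$ is confined to a strip of height $O(1/p^k)$ in the contracted picture. Combining these,
\begin{align}
d_H(F_{k+1}, F) \le d_H(\Phi(F_k), F) + \frac{C}{p^{k}} \le \frac{1}{p}\, d_H(F_k, F) + \frac{C}{p^{k}},
\end{align}
and iterating this inequality yields $d_H(F_k, F)\to 0$, which is precisely the asserted convergence of $S(p^k-1)/p^k$.

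The main obstacle I anticipate is not the abstract completeness argument but the faithful translation of the cell-wise identities of Theorem~\ref{thm:tkhs01} into the clean set-level iterated function system above: one must choose the block length and the maps $\phi_j$ so that the general modulus $p^m$ is respected, identify precisely which translated copies occur (this is where the additive structure of the rule and the base-$p$ digit expansion of the indices enter), and bound the truncation set $E_k$ uniformly in $k$. Handling $m>1$, where the self-similarity is visible only after coarsening time by $p^{m-1}$, is the delicate point; for $m=1$, the case relevant to Rule~$150$, the decomposition is direct and the error estimate is immediate.
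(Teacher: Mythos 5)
The paper itself offers no proof of this theorem: it is quoted verbatim from \cite{takahashi1992}, so your proposal has to be judged on its own soundness, and it contains a fatal gap. The set-level recursion $F_{k+1} = \Phi(F_k) \cup E_k$ that drives your whole argument is false, because for a linear automaton the pattern spawned below a coarse row is the \emph{sum modulo $p^m$} of the single-seed evolutions started at its nonzero cells, and the support of a sum is in general a \emph{proper subset} of the union of the supports: overlapping copies cancel. This already fails at the first step for Rule $150$ (the automaton of this paper, with $p=2$, $m=1$): row $2$ is nonzero at $i=-2,0,2$; evolving one more step, the three copies contribute $\{-3,-2,-1\}$, $\{-1,0,1\}$, $\{1,2,3\}$, whose union is all of $\{-3,\dots,3\}$, but the cells $i=\pm 1$ receive two contributions each and cancel, so row $3$ is $\{-3,-2,0,2,3\}$. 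In your scaled picture, $\Phi(F_1)$ contains the points $(\pm 1/4,\,3/4)$, which do not lie in $F_2$. Hence the true relation is only $F_{k+1} \subseteq \Phi(F_k)$, and no choice of the error set $E_k$ can repair your identity, since taking a union can only \emph{add} points while the defect is an over-count. Nor are these cancellations asymptotically negligible: they recur at every scale (they are the large empty regions visible in Figure~\ref{fig:150lim}), and they are exactly what produce the Fibonacci-type transition matrix $M$ of Proposition~\ref{prop:cum150} and the dimension $\log(1+\sqrt{5})/\log 2$ of Corollary~\ref{cor:limset150}; a clean Hutchinson recursion by a fixed family of ratio-$1/2$ similitudes, which your contraction argument would force in the limit, is incompatible with this graph-directed structure. (Your gloss on the second half of Theorem~\ref{thm:tkhs01} is also inaccurate --- it constrains only times divisible by $p^m$, not the intermediate times --- but that is secondary.)

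The part of Theorem~\ref{thm:tkhs01} you do use correctly, namely $(T^{pn}x_o)_{pi}=(T^n x_o)_i$, already proves the theorem by a much softer route, with no IFS at all. For $m=1$ it gives $p\cdot S(p^k-1) \subseteq S(p^{k+1}-1)$, i.e. $F_k \subseteq F_{k+1}$: the prefractals form an increasing sequence of nonempty compact subsets of your triangle $\Delta$, and any such sequence converges in the Hausdorff metric to the closure of its union (given $\varepsilon>0$, cover that closure by finitely many $\varepsilon$-balls centred at points of $\bigcup_k F_k$; by nesting all the centres lie in a single $F_K$, so every later $F_k$ is $2\varepsilon$-dense in the limit set). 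Monotone convergence needs only the inclusion that replication gives for free, and is immune to cancellation. For $m>1$, apply the same argument to the sub-pattern at times divisible by $p^{m-1}$, which is where the replication identity holds; every nonzero cell has a nonzero ancestor in its backward light cone at the nearest such time, so it lies within scaled distance $p^{m-1}/p^k$ of that sub-pattern, whence $F_k$ and its coarsening have Hausdorff distance tending to $0$ and the same limit. This repair keeps your compactness framework but discards the contraction-mapping machinery, which is the part that cannot be made to work.
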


Based on Theorems~\ref{thm:tkhs01} and \ref{thm:tkhs02}, we obtain the following corollary, because Rule $150$ is a two-state linear cellular automaton.

\begin{cor}
\label{cor:limset150}
A subset of a two-dimensional Euclidean space $S_{150}(2^k-1)$ is given by 
\begin{align}
S_{150}(2^k-1) = \{ (i, t) \in {\mathbb Z}^2 \mid (T_{150}^t x_o)_i > 0, 0 \leq t \leq 2^k-1 \}.
\end{align}
The limit set of the orbit of Rule $150$ from the single site seed $x_o$, 
$\lim_{k \to \infty} (S_{150}(2^k-1)/2^k)$, is a fractal 
whose Hausdorff dimension is $\log (1+\sqrt{5}) / \log 2$.
\end{cor}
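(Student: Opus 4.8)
The plan is to deduce existence of the limit set from Theorem~\ref{thm:tkhs02} and then pin down the dimension by counting nonzero cells at dyadic scales. Since Rule $150$ is a two-state (equivalently $2^1$-state) linear cellular automaton, Theorem~\ref{thm:tkhs02} with $p=2$, $m=1$ guarantees that $L:=\lim_{k\to\infty}(S_{150}(2^k-1)/2^k)$ exists, and Theorem~\ref{thm:tkhs01} supplies the exact self-similarity under doubling: $(T_{150}^{2n}x_o)_{2i}=(T_{150}^n x_o)_i$ for every $n$, while $(T_{150}^n x_o)_i=0$ whenever $n$ is even and $i$ is odd. The arithmetic engine behind this is that, identifying a configuration with a Laurent polynomial over $\mathbb F_2$, the map $T_{150}$ is multiplication by $x^{-1}+1+x$, so row $t$ of the single-seed orbit is $(x^{-1}+1+x)^t$, and the Frobenius identity $(x^{-1}+1+x)^2=x^{-2}+1+x^2$ propagates the self-similarity to all powers of $2$.

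Next I would read off a self-similar decomposition of the prefractal. Writing the block of rows $0\le t<2^{k+1}$ as a lower half (an exact copy of $S_{150}(2^k-1)$) and an upper half, the upper row $2^k+r$ factors as $(x^{-2^k}+1+x^{2^k})(x^{-1}+1+x)^r$, i.e.\ three copies of the row-$r$ pattern shifted to centres $-2^k,0,2^k$. These copies are pairwise disjoint for $0\le r<2^{k-1}$ but overlap, with cancellation over $\mathbb F_2$, for $2^{k-1}\le r<2^k$. Carrying out this bookkeeping yields the linear recurrence $N_{k+1}=2N_k+4N_{k-1}$ for the total number $N_k$ of nonzero cells in $S_{150}(2^k-1)$, which is exactly the count established in \cite{kawanami2020}; its characteristic equation $\lambda^2-2\lambda-4=0$ has dominant root $\lambda=1+\sqrt5$, so $N_k\asymp(1+\sqrt5)^k$.

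To convert this growth rate into a dimension, I would cover $S_{150}(2^k-1)/2^k$ by dyadic squares of side $2^{-k}$; exactly $N_k$ of them meet the set, giving matching upper and lower box estimates and box-counting dimension $\log N_k/\log 2^k\to\log(1+\sqrt5)/\log2$. Because the decomposition above exhibits $L$ as the attractor of a graph-directed iterated function system whose maps all have ratio $1/2$, with a finite alphabet of tile types and incidence matrix $M$ of Perron root $1+\sqrt5$, the Mauldin--Williams (graph-directed Moran) theorem identifies $\dim_H L$ with the unique $s$ making the spectral radius of $2^{-s}M$ equal to $1$, namely $s=\log(1+\sqrt5)/\log2$, once the open set condition is verified.

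The hard part will be the two places where the $\mathbb F_2$-structure bites. First, the overlaps for $r\ge 2^{k-1}$ cancel modulo $2$, which is precisely why no single-ratio self-similarity (and no elementary functional equation for $N_k$) is available; the remedy is to track a finite set of boundary configurations of the shifted copies so that the overlap contributions close up into the constant-coefficient recurrence above and into a genuine incidence matrix. Second, to upgrade box-counting to Hausdorff dimension one must check the open set condition, i.e.\ that the ratio-$1/2$ images meet only along lower-dimensional pieces; this follows from the essentially dyadic placement of the copies, but it is the step requiring care. Once the incidence matrix is fixed, its Perron root is read off from $\lambda^2-2\lambda-4=0$, giving the asserted value $\log(1+\sqrt5)/\log2$.
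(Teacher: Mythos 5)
Your overall strategy is sound and, in fact, more self-contained than what the paper does: the paper offers no proof of Corollary~\ref{cor:limset150} at all, but simply specializes Takahashi's results (Theorems~\ref{thm:tkhs01} and \ref{thm:tkhs02}, with $p=2$, $m=1$) and quotes the dimension value, which in \cite{takahashi1992} is derived from exactly the kind of graph-directed self-similar analysis you sketch. Your ingredients are correct: existence of the limit from Theorem~\ref{thm:tkhs02}; the Frobenius identity giving row $2^k+r$ as three copies of row $r$ centred at $-2^k$, $0$, $2^k$, disjoint precisely for $r<2^{k-1}$; the recurrence $N_{k+1}=2N_k+4N_{k-1}$, which agrees with the transition matrix $M$ of Proposition~\ref{prop:cum150} (so you could simply cite that proposition, as the paper does via \cite{kawanami2020, claussen2008}, rather than re-derive the recurrence through the cancellation bookkeeping); and the resulting growth $N_k\asymp(1+\sqrt5)^k$, which gives the box-counting dimension. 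Relative to the paper, your route buys an actual argument where the paper has only a citation; the paper's route buys brevity and avoids re-proving what \cite{takahashi1992} already establishes.

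That said, two steps in your proposal are asserted rather than proved, and they carry essentially all of the mathematical content of the dimension claim beyond the upper bound. The box-counting estimate (after fixing the minor imprecision that the number of dyadic squares of side $2^{-k}$ meeting the limit set is only comparable to $N_k$, not exactly $N_k$; comparability follows from the persistence of nonzero cells under doubling in Theorem~\ref{thm:tkhs01}) yields only $\dim_H \le \dim_B = \log(1+\sqrt5)/\log 2$. For the matching lower bound you invoke a Mauldin--Williams graph-directed system ``with incidence matrix $M$'' plus the open set condition, but you never construct the system: because of the ${\mathbb F}_2$-cancellation for $r\ge 2^{k-1}$, the upper half of the pattern is \emph{not} a union of three scaled copies of the whole set, and exhibiting a finite alphabet of tile types that closes up under subdivision --- and whose incidence matrix genuinely has Perron root $1+\sqrt5$ --- is precisely the hard step; the same goes for verifying the open set condition. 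You flag both points honestly, but as written they remain gaps. They are exactly the points that Takahashi's theorems package, so your proof is complete only modulo either carrying out that construction in detail or falling back on the citation that the paper itself uses.
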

Figure \ref{fig:150lim} shows the limit set of Rule $150$ 
with time steps $n=2^k-1$ as $k$ tends to infinity.

\subsection{Numbers of nonzero states for Rule $150$}

Let $num_T(n)$ be the number of nonzero states in a spatial pattern $T^n x_o$ for time step $n$, and $cum_T(n)$ be the cumulative sum of the number of nonzero states in a spatial pattern $T^m x_o$ from time step $m=0$ to $n$ for a cellular automaton. Thus, 
\begin{align}
num_T(n) = \sum_{i \in {\mathbb Z}} (T^n x_o)_i, \
cum_T(n) = \sum_{m = 0}^n \sum_{i \in {\mathbb Z}} (T^m x_o)_i.
\end{align}
In the case of Rule $150$, the numbers are denoted by $num_{150}(n)$ and $cum_{150}(n)$, respectively.
Figure~\ref{fig:cum150} shows the dynamic of $cum_{150}(n)$ for $0 \leq n < 256$.
We introduce the previous results about the number of nonzero states in the spatio-temporal and spatial patterns according to self-similar structures.

\begin{figure}[H]
\begin{center}
\includegraphics[width=.47\linewidth]{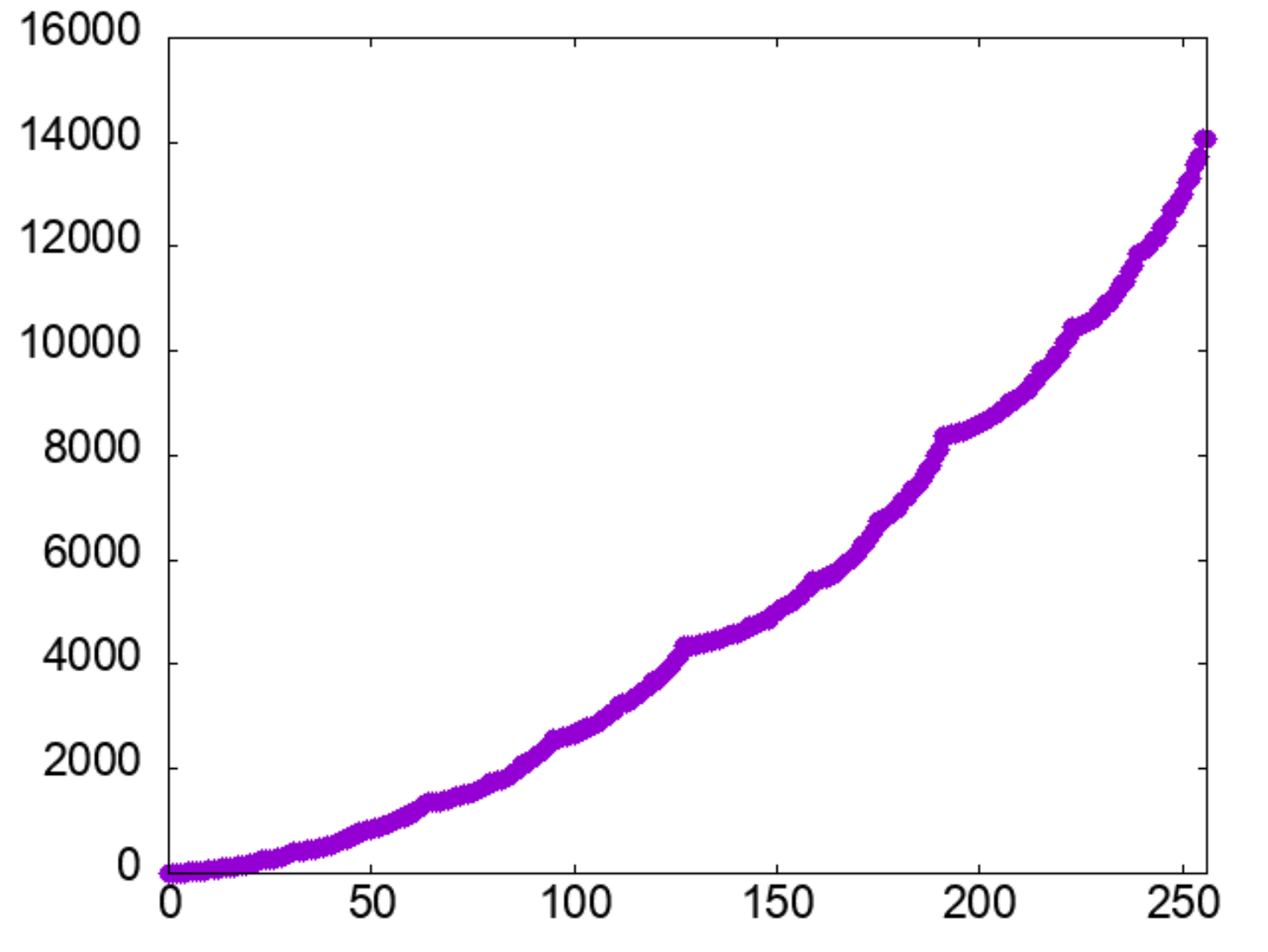}
\end{center}
\caption{$\{cum_{150}(n)\}$ of Rule $150$ for $0 \leq n < 256$}
\label{fig:cum150}
\end{figure}

\begin{prop}[\cite{kawanami2020, takahashi1992, claussen2008}]
\label{prop:cum150}
We introduce the transition matrix $M$ and the vector $v_0$ as the initial values;
\begin{align}
M = 
\left(
\begin{array}{cc}
2 & 4 \\
1 & 0 
\end{array} 
\right), \
v_0 = 
\left(
\begin{array}{c}
4 \\
1  
\end{array}
\right).
\end{align}
The cumulative sum of the number of nonzero states from time step $0$ to $2^k-1$ for Rule $150$, $cum_{150}(2^k-1)$, is given by $a M^{k-1} v_0$ for a vector $a = (1 \ 0)$ and $k \geq 1$. 
Hence, 
\begin{align}
\label{eq:cum150}
cum_{150}(2^k -1) 
&= \frac{\sqrt{5}}{20}(1+\sqrt{5})^{k+2}-\frac{\sqrt{5}}{20}(1-\sqrt{5})^{k+2}.
\end{align}
\end{prop}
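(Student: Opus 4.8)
The plan is to split the argument into two independent pieces: (i) showing that $cum_{150}(2^k-1)=a M^{k-1}v_0$, which is a statement about the self-similar counting of nonzero states, and (ii) converting the matrix expression into the closed form by diagonalizing $M$. Step (ii) is routine linear algebra, so the real work sits in step (i), and within (i) the crux is a single second-order recurrence.

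For step (i), the first thing I would notice is that $M$ is the companion matrix of the scalar recurrence
\begin{equation}
c_k = 2c_{k-1} + 4c_{k-2},
\end{equation}
and that $v_0=(c_1,c_0)^{\mathsf T}$ once we set $c_0=cum_{150}(0)=1$ and $c_1=cum_{150}(1)=4$. Writing $u_k=M^{k-1}v_0$, a one-line induction using the bottom row $(1\ 0)$ of $M$ gives $u_k=(c_k,c_{k-1})^{\mathsf T}$, whence $aM^{k-1}v_0=c_k$. So step (i) reduces to proving that $c_k:=cum_{150}(2^k-1)$ obeys the displayed recurrence with those initial values. I would verify the base cases $cum_{150}(0)=1$, $cum_{150}(1)=4$, $cum_{150}(3)=12$ directly from the orbit of the single site seed $x_o$.

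Establishing the recurrence itself is the main obstacle, and here I would lean on the linearity of Rule $150$ together with Theorem~\ref{thm:tkhs01} (taken with $p=2$, $m=1$). Representing the row at time $n$ as a Laurent polynomial over $\mathbb{F}_2$ and noting that $T_{150}$ acts as multiplication by $z^{-1}+1+z$, the count $num_{150}(n)$ is the Hamming weight of $(z^{-1}+1+z)^n$. The Frobenius relation (squaring substitutes $z\mapsto z^2$ and preserves weight) immediately yields $num_{150}(2n)=num_{150}(n)$, and the same device gives a companion identity for odd times; combined with the self-similar scaling $(T^{2n}x_o)_{2i}=(T^n x_o)_i$ of Theorem~\ref{thm:tkhs01}, summing the row counts over the dyadic block of times $[0,2^k-1]$ expresses that block's cumulative count through two rescaled copies of the block of height $2^{k-1}$ plus boundary rows, which is exactly what produces the coefficients $2$ and $4$. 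The delicate bookkeeping of those boundary rows, matching the combinatorics carried out in \cite{kawanami2020}, is where I expect the proof to demand the most care.

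For step (ii), I would compute the characteristic polynomial $\det(M-\lambda I)=\lambda^2-2\lambda-4$, whose roots are $\lambda_{\pm}=1\pm\sqrt5$, so $c_k=A\lambda_+^k+B\lambda_-^k$. Solving $A+B=1$ and $A\lambda_{+}+B\lambda_{-}=4$ gives $A=(5+3\sqrt5)/10$ and $B=(5-3\sqrt5)/10$. The only remaining point is cosmetic: using $(1\pm\sqrt5)^2=6\pm2\sqrt5$ one checks $A=\tfrac{\sqrt5}{20}(1+\sqrt5)^2$ and $B=-\tfrac{\sqrt5}{20}(1-\sqrt5)^2$, so that $c_k=A\lambda_+^k+B\lambda_-^k$ collapses to the stated normalization
\begin{equation}
cum_{150}(2^k-1)=\frac{\sqrt5}{20}(1+\sqrt5)^{k+2}-\frac{\sqrt5}{20}(1-\sqrt5)^{k+2},
\end{equation}
completing the proof.
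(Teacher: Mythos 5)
The paper never proves this proposition: it is imported verbatim from \cite{kawanami2020, takahashi1992, claussen2008}, so there is no internal proof to compare against, and your attempt has to stand on its own. Its outer scaffolding does: the companion-matrix reduction (with $v_0=(c_1,c_0)^{\mathsf T}$, the induction on the rows of $M$ giving $aM^{k-1}v_0=c_k$), the base values $cum_{150}(0)=1$, $cum_{150}(1)=4$, $cum_{150}(3)=12$, the characteristic polynomial $\lambda^2-2\lambda-4$ with roots $1\pm\sqrt5$, the coefficients $A=(5+3\sqrt5)/10$, $B=(5-3\sqrt5)/10$, and the identities $A=\tfrac{\sqrt5}{20}(1+\sqrt5)^2$, $B=-\tfrac{\sqrt5}{20}(1-\sqrt5)^2$ are all correct; I verified them.

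The genuine gap is the one step that carries all the content: the recurrence $cum_{150}(2^k-1)=2\,cum_{150}(2^{k-1}-1)+4\,cum_{150}(2^{k-2}-1)$. Your Frobenius argument correctly gives $num_{150}(2n)=num_{150}(n)$, but the claim that ``the same device gives a companion identity for odd times'' is false as stated: writing row $n$ as the coefficient sequence of $g=f^n$ with $f=z^{-1}+1+z$ over $\mathbb{F}_2$, row $2n+1$ is $f(z)\,g(z^2)$, whose weight is $num_{150}(n)+\#\{i : g_i\neq g_{i+1}\}$. The transition count is \emph{not} a function of $num_{150}(n)$, so there is no scalar identity for odd rows, and no summation of scalar identities over the dyadic block can produce a two-term recurrence; this is precisely why the cited works (and Proposition~\ref{prop:num} of this paper) use $2\times2$ matrix products, i.e.\ a two-component iteration. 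Deferring exactly this bookkeeping to \cite{kawanami2020} leaves the crux unproven.

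The gap is cheap to close once you track a second statistic. Let $B_n$ denote the number of maximal blocks of consecutive $1$s in row $n$. Stretching (Theorem~\ref{thm:tkhs01}) isolates every $1$, so $num_{150}(2n)=num_{150}(n)$ and $B_{2n}=num_{150}(n)$; the odd-row computation above gives $num_{150}(2n+1)=num_{150}(n)+2B_n$ (each block contributes two transitions) and, by a block-by-block check, $B_{2n+1}=num_{150}(n)$. Then for $k\geq2$, pairing times $2m,2m+1$,
\begin{align}
cum_{150}(2^k-1)=\sum_{m=0}^{2^{k-1}-1}\left(2\,num_{150}(m)+2B_m\right)
=2\,cum_{150}(2^{k-1}-1)+2\sum_{m=0}^{2^{k-1}-1}B_m,
\end{align}
and pairing once more, $\sum_{m=0}^{2^{k-1}-1}B_m=\sum_{j=0}^{2^{k-2}-1}\left(B_{2j}+B_{2j+1}\right)=2\,cum_{150}(2^{k-2}-1)$, which yields the recurrence. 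Alternatively, the recurrence follows directly from Proposition~\ref{prop:num}, which the paper already assumes: since $M_0$ and $M_1$ both have first column $(1,1)^{\mathsf T}$, the first component of $aM_{x_{l-1}}\cdots M_{x_0}$ plays exactly the role of $B_n$ above. Either route makes your proof complete.
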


\begin{prop}[\cite{kawanami2020}]
\label{prop:num}
We introduce the transition matrices and the vector
\begin{align}
M_0 =
\left( 
\begin{array}{cc}
1 & 0 \\
1 & 0 
\end{array}
\right) , \
M_1 = 
\left(
\begin{array}{cc}
1 & 2 \\
1 & 0
\end{array}
\right), \
u_0 =
\left(
\begin{array}{c}
1 \\
1  
\end{array}
\right).
\end{align}
Assuming that the binary expansion of $n$ is $n_{l-1} n_{l-2} \cdots n_1 n_0$ $(n_i \in \{0, 1\}, i=0,1, \ldots, l-1)$,
let $p_r$ be the number of clusters consisting of continuous $r$ $1$s in the binary number. Thus,
\begin{align}
\label{eq:num150}
num_{150}(n) &=  a M_{n_{l-1}} M_{n_{l-2}} \cdots M_{n_0} u_0
= \prod_{r=0}^l (a M_1^r u_0)^{p_r}  \nonumber\\
&= \prod_{r=0}^l \left( \frac{2^{r+2} + (-1)^{r+1}}{3} \right)^{p_r}.
\end{align}
\end{prop}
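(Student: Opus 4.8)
The plan is to reduce the combinatorial count to the Hamming weight of a polynomial over $\mathbb{F}_2$ and then to extract the matrix recursion one binary digit at a time. First I would encode a configuration $x\in\{0,1\}^{\mathbb Z}$ as the Laurent polynomial $\sum_i x_i X^i$ over $\mathbb{F}_2$; by (\ref{eq:1d150}) the map $T_{150}$ becomes multiplication by $X^{-1}+1+X$, so the orbit of the single site seed $x_o$ (the constant $1$) corresponds to $(X^{-1}+1+X)^n$. A shift by a power of $X$ does not change the number of nonzero coefficients, so $num_{150}(n)$ equals the number of nonzero coefficients of $P_n(X):=(1+X+X^2)^n$ over $\mathbb{F}_2$; write $a_n$ for this count. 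Alongside $a_n$ I would introduce the auxiliary quantity $b_n$, the number of nonzero coefficients of $(1+X)P_n(X)$, which will supply the second coordinate of the state vector.

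Next I would derive a digit recursion using the Frobenius identity $P_{2m}(X)=P_m(X)^2=P_m(X^2)$. Because $P_m(X^2)$ involves only even powers of $X$, the even-degree and odd-degree parts of each polynomial below have disjoint supports, so their weights simply add over $\mathbb{F}_2$ (this disjointness, not additivity of weight in general, is what makes the count exact). Splitting $P_{2m+1}=(1+X+X^2)P_m(X^2)$ into its odd part $XP_m(X^2)$ and even part $(1+X^2)P_m(X^2)$, and using $(1+X)(1+X+X^2)=1+X^3$, I would obtain
\begin{align}
a_{2m}=a_m,\quad a_{2m+1}=a_m+b_m,\quad b_{2m}=b_{2m+1}=2a_m.
\end{align}
Setting $w_m=(a_m,\,b_m/2)^{\mathsf T}$ these relations read $w_{2m}=M_0 w_m$ and $w_{2m+1}=M_1 w_m$, with $w_0=(1,1)^{\mathsf T}=u_0$. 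Iterating over the digits of $n$ (built up from its most significant bit) then gives $num_{150}(n)=a\,w_n$, a product of the matrices $M_{n_i}$ indexed by the digits of $n$; by the factorization below the value depends only on the multiset of runs of $1$s, so the order of the factors is immaterial and the expression coincides with $a\,M_{n_{l-1}}M_{n_{l-2}}\cdots M_{n_0}u_0$, the first asserted equality.

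For the second equality I would exploit the rank-one identity $M_0=u_0\,a$ together with $aM_0=a$, $M_0u_0=u_0$ and $M_0^2=M_0$. These let me collapse every maximal run of $0$s to a single factor $M_0$ and absorb leading and trailing $0$s, reducing the product to $a\,M_1^{r_1}M_0M_1^{r_2}M_0\cdots M_0 M_1^{r_s}u_0$, where $r_1,\dots,r_s$ are the lengths of the maximal runs of $1$s. Substituting $M_0=u_0a$ at each separator factors the scalar as $\prod_{j=1}^{s}(a\,M_1^{r_j}u_0)$, and collecting runs by length yields $\prod_{r}(a\,M_1^{r}u_0)^{p_r}$. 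The closed form then follows by diagonalizing $M_1$: its characteristic polynomial $\lambda^2-\lambda-2=(\lambda-2)(\lambda+1)$ has eigenvalues $2$ and $-1$, so writing $x_r=aM_1^ru_0$ gives the recurrence $x_{r+1}=x_r+2x_{r-1}$ with $x_0=1$, $x_1=3$, whose solution is $x_r=\frac{2^{r+2}+(-1)^{r+1}}{3}$.

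I expect the main obstacle to be the digit recursion: identifying the correct auxiliary weight $b_n$ and verifying the four relations without hidden cancellation. The subtle point is that Hamming weight is not additive over $\mathbb{F}_2$ in general, so every step must be arranged so that the pieces being counted have disjoint even-versus-odd-degree supports; the identity $(1+X)(1+X+X^2)=1+X^3$ is exactly what keeps the $b$-recursion clean. Once the state vector $w_m$ is set up correctly, the factorization through $M_0=u_0a$ and the eigenvalue computation are routine.
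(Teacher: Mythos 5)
Your proposal is correct, but there is nothing in this paper to compare it against: Proposition~\ref{prop:num} is imported verbatim from \cite{kawanami2020} and no proof of it appears here, so your argument serves as a self-contained substitute rather than a variant of an in-paper proof. Checking it in detail: the reduction of $num_{150}(n)$ to the $\mathbb{F}_2$-weight of $P_n(X)=(1+X+X^2)^n$ is standard and sound; the auxiliary weight $b_n$ of $(1+X)P_n(X)$ is the right second coordinate; and the four recursions $a_{2m}=a_m$, $a_{2m+1}=a_m+b_m$, $b_{2m}=b_{2m+1}=2a_m$ all hold because in each case the two pieces being counted have supports in even and odd degrees respectively, so the weights genuinely add (the identity $(1+X)(1+X+X^2)=1+X^3$ is indeed what keeps $b_{2m+1}$ clean). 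With $w_m=(a_m,b_m/2)^{\mathsf T}$ one checks $w_0=u_0$, $w_{2m}=M_0w_m$, $w_{2m+1}=M_1w_m$, so $a_n = a\,M_{n_0}M_{n_1}\cdots M_{n_{l-1}}u_0$. One point you handled that a careless reader might miss: this product has the \emph{least} significant digit leftmost, the reverse of the order asserted in the proposition, and the two orderings are not equal as matrix products; your rank-one factorization $M_0=u_0a$ together with $aM_0=a$, $M_0u_0=u_0$, $M_0^2=M_0$ collapses either ordering to the symmetric scalar product $\prod_j(aM_1^{r_j}u_0)$ over the multiset of maximal runs of $1$s, which is reversal-invariant, so the gap is genuinely closed and the cluster formula $\prod_r (aM_1^ru_0)^{p_r}$ follows at once. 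The closed form via Cayley--Hamilton for $M_1$ (eigenvalues $2$ and $-1$, giving $x_{r+1}=x_r+2x_{r-1}$, $x_0=1$, $x_1=3$, hence $x_r=(2^{r+2}+(-1)^{r+1})/3$) is routine and correct.
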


\begin{rmk}
In this paper we set $num_{150}(-1) = cum_{150}(-1) = 0$ for $n = -1$, due to a technical reason.
\end{rmk}

\section{Main results}
\label{sec:main}

In this section, we give a function on the unit interval by the cumulative sum of the number of nonzero states of Rule $150$.

\begin{dfn}
\label{dfn:fx0}
For $x = \sum_{i=1}^{\infty} (x_i/2^i) \in [0,1]$ and $k \in {\mathbb Z}_{>0}$ a function $F_k(x)$ is given by $cum_{150} ((\sum_{i=1}^k x_i 2^{k-i})-1 ) / cum_{150}(2^k-1)$, that is a normalized sum of the number by $cum_{150}(2^k-1)$. 
Considering its limit, we name it $F$. 
Thus, 
\begin{align}
\label{eq:org}
F (x) &:= \lim_{k \to \infty} F_k (x) 
= \lim_{k \to \infty} \frac{cum_{150}\left((\sum_{i=1}^k x_i 2^{k-i})-1 \right)}{cum_{150}(2^k-1)}.
\end{align}
\end{dfn}

Figure~\ref{fig:plots} shows the graph of $F(x)$ for $x \in [0,1]$ and the limit set of Rule $150$.

\begin{figure}[H]
\begin{center}
\includegraphics[width=.55\linewidth]{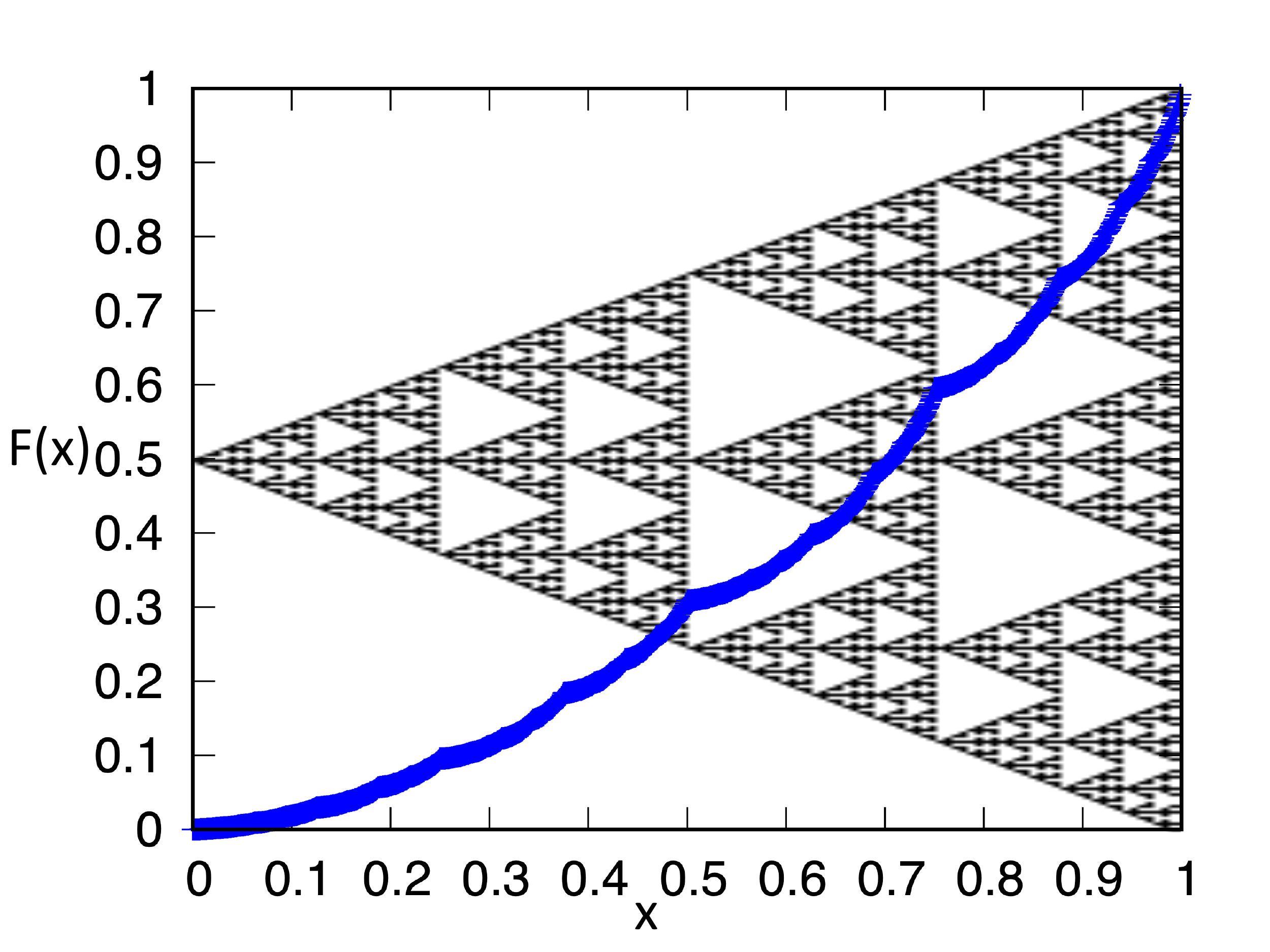}
\end{center}
\caption{$F(x)$ and the limit set of Rule $150$}
\label{fig:plots}
\end{figure}

In Section~\ref{subsec:def_sf}, we construct the function $F$ focusing on self-similar structures of the spatio-temporal pattern $\{ T_{150} x_o \}$, 
and Section~\ref{subsec:prp_sf} describes properties of $F$.
In Section~\ref{subsec:sfunc_sf} we give functional equations that $F$ satisfies, and we prove that $F$ is the only solution of them.

\subsection{Constructing the function $F$}
\label{subsec:def_sf}

First, we show that $cum_{150}(m-1)$ for $m>0$ is represented by a sum of $cum_{150}(2^i-1)$s. 
By Theorems~\ref{thm:tkhs01} and \ref{thm:tkhs02} we obtain the following result.

\begin{lem}
\label{lem:finite}
Let $x = \sum_{i=1}^{\infty} x_i/2^i\in [0,1]$, and $m = \sum_{i=0}^{k-1} x_{k-i} \, 2^i = \sum_{i=1}^k x_i \, 2^{k-i} \geq 0$.
\begin{align}
cum_{150}(m-1) 
&= \sum_{i=1}^k x_i \ num_{150}\left( \sum_{j=1}^{k-i-1} x_j 2^{k-i-1-j} \right) cum_{150}(2^{k-i}-1).
\end{align}
\end{lem}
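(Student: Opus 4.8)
The plan is to evaluate $cum_{150}(m-1)=\sum_{t=0}^{m-1}num_{150}(t)$ by partitioning the index range $\{0,1,\dots,m-1\}$ into dyadic blocks read off from the binary expansion $x_1x_2\cdots x_k$ of $m$. Setting $m_i=\sum_{j=1}^{i}x_j2^{k-j}$ for the value of the leading $i$ digits (so $m_0=0$ and $m_k=m$), the range is the disjoint union over the indices $i$ with $x_i=1$ of the intervals $[m_{i-1},\,m_{i-1}+2^{k-i})$, each of length $2^{k-i}$, while the indices with $x_i=0$ contribute nothing. This immediately gives
\begin{align}
cum_{150}(m-1)=\sum_{i=1}^{k}x_i\sum_{r=0}^{2^{k-i}-1}num_{150}(m_{i-1}+r),\nonumber
\end{align}
so the task reduces to evaluating each inner block sum.

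For a fixed $i$ with $x_i=1$, the number $m_{i-1}$ is divisible by $2^{k-i+1}$, so for $0\le r<2^{k-i}$ the integer $m_{i-1}+r$ has binary expansion $x_1\cdots x_{i-1}\,0\,r_{k-i-1}\cdots r_0$: the leading digits $x_1\cdots x_{i-1}$ are frozen, the digit in position $k-i$ is $0$, and the lowest $k-i$ digits sweep through all of $\{0,\dots,2^{k-i}-1\}$ as $r$ does. The crucial structural input is that $num_{150}$ is multiplicative across a digit-block flanked by a $0$: by Proposition~\ref{prop:num} the value $num_{150}(n)=\prod_r(aM_1^r u_0)^{p_r}$ depends only on the multiset of maximal runs of $1$s, and inserting, deleting, or lengthening runs of $0$s changes neither that multiset nor the product. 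Since the frozen prefix is separated from the varying suffix by the guaranteed $0$ in position $k-i$, no run of $1$s can bridge the two parts, and therefore $num_{150}(m_{i-1}+r)=num_{150}(A_i)\,num_{150}(r)$, where $A_i=\sum_{j=1}^{i-1}x_j2^{i-1-j}$ is the integer whose binary digits are exactly the leading block $x_1\cdots x_{i-1}$.

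Summing this factorization over $r$ and recognizing $\sum_{r=0}^{2^{k-i}-1}num_{150}(r)=cum_{150}(2^{k-i}-1)$ directly from the definition of $cum_{150}$, each block sum collapses to $num_{150}(A_i)\,cum_{150}(2^{k-i}-1)$; substituting back into the displayed decomposition yields the asserted identity. I expect the main obstacle to be the clean justification of the factorization $num_{150}(m_{i-1}+r)=num_{150}(A_i)\,num_{150}(r)$, that is, verifying that the frozen high part and the sweeping low part genuinely do not interact; this rests on the separating $0$ in position $k-i$ together with the insensitivity of the run structure to zeros furnished by Proposition~\ref{prop:num}, which is itself a reflection of the self-similarity in Theorem~\ref{thm:tkhs01}. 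The only remaining care is bookkeeping at the ends: the degenerate cases where the prefix or the suffix block is empty are absorbed by the conventions $num_{150}(0)=1$ and $cum_{150}(-1)=0$, so no separate boundary analysis is needed.
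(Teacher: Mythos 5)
Your argument is correct, and it is essentially the argument the paper intends: the paper states Lemma~\ref{lem:finite} without an explicit proof (appealing only to Theorems~\ref{thm:tkhs01} and \ref{thm:tkhs02}), and your dyadic-block decomposition of $\{0,\dots,m-1\}$ combined with the multiplicativity of $num_{150}$ across a separating $0$ is the natural way to fill that in. The key factorization is sound: $m_{i-1}=\sum_{j=1}^{i-1}x_j2^{k-j}$ is divisible by $2^{k-i+1}$, so the digit of $m_{i-1}+r$ in position $k-i$ is $0$ for all $0\le r<2^{k-i}$; the multiset of runs of $1$s of the concatenated string is then the disjoint union of the run multisets of the prefix and of $r$, and the product formula of Proposition~\ref{prop:num} turns this into $num_{150}(m_{i-1}+r)=num_{150}(A_i)\,num_{150}(r)$.

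However, you should state explicitly that what you prove is
\begin{align}
cum_{150}(m-1)=\sum_{i=1}^{k}x_i\,num_{150}\!\left(\sum_{j=1}^{i-1}x_j2^{i-1-j}\right)cum_{150}(2^{k-i}-1),\nonumber
\end{align}
with the frozen prefix $x_1\cdots x_{i-1}$ of length $i-1$, whereas the lemma as printed has $num_{150}\bigl(\sum_{j=1}^{k-i-1}x_j2^{k-i-1-j}\bigr)$, a prefix of length $k-i-1$. These are not the same, and the printed version is actually false as stated: for $m=3$, $k=2$ (so $x_1=x_2=1$) one has $cum_{150}(2)=1+3+3=7$, which your formula reproduces ($1\cdot num_{150}(0)\cdot cum_{150}(1)+1\cdot num_{150}(1)\cdot cum_{150}(0)=4+3$), while the printed formula gives $1\cdot num_{150}(0)\cdot cum_{150}(1)+1\cdot num_{150}(0)\cdot cum_{150}(0)=4+1=5$. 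The printed statement is an index slip: it mixes the indexing $\sum_i x_i\,(\cdot)\,cum_{150}(2^{k-i}-1)$ with the re-indexed form $\sum_i x_{k-i}\,(\cdot)\,cum_{150}(2^{i}-1)$, in which the prefix length $k-i-1$ would be correct. Your corrected version is the one actually used later in the proof of Theorem~\ref{thm:fx}, where this factor is rewritten as $aM_{x_0}M_{x_1}\cdots M_{x_{i-1}}u_0=r(x)_i$. One minor bookkeeping correction: when the suffix block is empty ($i=k$) the relevant quantity is $cum_{150}(2^0-1)=num_{150}(0)=1$, not $cum_{150}(-1)$; the convention $cum_{150}(-1)=0$ is only needed for $m=0$, where both sides vanish.
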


Lemma~\ref{lem:finite} gives the accurate number of nonzero states for any time step $m > 0$.
Thus, the function $F$ is given by the following equation.

\begin{thm}
\label{thm:fx}
Let 
\begin{align}
a = 
\left(
\begin{array}{cc}
1 & 0 
\end{array}
\right), \
M_0 = 
\left(
\begin{array}{cc}
1 & 0 \\
1 & 0 
\end{array}
\right), \
M_1 =
\left(
\begin{array}{cc}
1 & 2 \\
1 & 0 
\end{array}
\right), \
u_0 =
\left(
\begin{array}{c}
1 \\
1 
\end{array}
\right).
\end{align}
For $x = \sum_{i=1}^{\infty} ( x_i / 2^i ) \in [0,1]$ the function $F : [0,1] \to [0,1]$ is given by
\begin{align}
\label{eq:def}
F(x) = \sum_{i=1}^{\infty} x_i r(x)_i \alpha^i, 
\end{align}
where $r(x)_i = a M_{x_{i-1}} M_{x_{i-2}} \cdots M_{x_2} M_{x_1} M_{x_0} u_0$ and $\alpha = (\sqrt{5}-1)/4$.
\end{thm}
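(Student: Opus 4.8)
The plan is to evaluate the limit in Definition~\ref{dfn:fx0} by expanding the truncated quotient with Lemma~\ref{lem:finite} and then passing to the limit termwise. Writing $m=\sum_{i=1}^k x_i 2^{k-i}$, Lemma~\ref{lem:finite} expresses $cum_{150}(m-1)$ as a finite sum whose $i$-th term is $x_i$ times $num_{150}$ of the integer read off from the leading digits $x_1,\dots,x_{i-1}$, times $cum_{150}(2^{k-i}-1)$. Dividing by $cum_{150}(2^k-1)$ gives
\begin{align*}
F_k(x)=\sum_{i=1}^k x_i\,num_{150}\!\Big(\sum_{j=1}^{i-1}x_j 2^{i-1-j}\Big)\frac{cum_{150}(2^{k-i}-1)}{cum_{150}(2^k-1)}.
\end{align*}
The key point is that the argument of $num_{150}$ depends only on $x_1,\dots,x_{i-1}$ and hence is independent of $k$, so that factor is frozen as $k\to\infty$.

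I would then identify this frozen factor with $r(x)_i$. By Proposition~\ref{prop:num} it equals $aM_{x_1}\cdots M_{x_{i-1}}u_0$; moreover the product form $\prod_r(aM_1^r u_0)^{p_r}$ shows that $num_{150}$ depends only on the multiset of block lengths of consecutive $1$s, so it is invariant under reversing the digit string. Consequently the factor also equals $aM_{x_{i-1}}\cdots M_{x_1}M_{x_0}u_0=r(x)_i$, where $x_0=0$ and $M_0u_0=u_0$ make the trailing $M_{x_0}$ harmless.

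Next I would compute the limit of the cumulative quotient from the closed form of Proposition~\ref{prop:cum150}. With $\phi=1+\sqrt5$ and $\psi=1-\sqrt5$,
\begin{align*}
\frac{cum_{150}(2^{k-i}-1)}{cum_{150}(2^k-1)}=\phi^{-i}\,\frac{1-(\psi/\phi)^{k-i+2}}{1-(\psi/\phi)^{k+2}},
\end{align*}
which tends to $\phi^{-i}=\alpha^i$ as $k\to\infty$, since $|\psi/\phi|<1$ and $\phi^{-1}=(\sqrt5-1)/4=\alpha$. Hence each summand of $F_k(x)$ converges to $x_i\,r(x)_i\,\alpha^i$.

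The main obstacle is justifying the interchange of the limit with the (eventually infinite) sum, which I would handle by a dominated-convergence argument for series. The quotient above is bounded uniformly in $1\le i\le k$ by a constant times $\alpha^i$, because the correction factor involving $(\psi/\phi)$ stays in a fixed interval bounded away from $0$ and $\infty$. For the other factor, merging two blocks of $1$s only increases $num_{150}$, so the all-ones string is extremal and $r(x)_i\le num_{150}(2^{i-1}-1)=(2^{i+1}+(-1)^i)/3=O(2^i)$. Therefore the $i$-th summand is dominated by $O((2\alpha)^i)$ with $2\alpha=(\sqrt5-1)/2<1$, a summable bound independent of $k$, and dominated convergence yields $F(x)=\sum_{i=1}^\infty x_i\,r(x)_i\,\alpha^i$. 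I would close by remarking that for a dyadic rational the chosen expansion makes $F_k$ eventually stabilize and both binary expansions give the same limit, so the formula is consistent with $F$.
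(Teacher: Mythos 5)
Correct, and essentially the paper's own argument: you expand $F_k$ via Lemma~\ref{lem:finite}, insert the closed forms from Propositions~\ref{prop:cum150} and~\ref{prop:num}, and pass to the limit, with your Tannery/dominated-convergence step (bound $O((2\alpha)^i)$, $2\alpha<1$) playing exactly the role of the paper's explicit split of $F_k$ into a main sum (prefactor $\to 1$, termwise bound $3^{i-1}\alpha^i$) and a vanishing error sum (prefactor $\to 0$) --- the same estimate in different packaging. If anything, your write-up is more careful on two points the paper passes over silently: the reversal invariance of $num_{150}$ (via the block-length product form) needed to identify $a M_{x_1}\cdots M_{x_{i-1}}u_0$ with $r(x)_i$ as defined in the theorem, and the uniform-in-$k$ boundedness of the correction factors in the cumulative quotient.
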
 
 
\begin{proof}
By Propositions~\ref{prop:cum150}, \ref{prop:num}, and Lemma~\ref{lem:finite} we have
\begin{align}
F_k (x) 
&= \frac{cum_{150}\left((\sum_{i=1}^k x_i 2^{k-i})-1 \right)}{cum_{150}(2^k-1)}\\
&= \frac{\sum_{i=1}^k x_i \ num\left( \sum_{j=1}^{k-i-1} x_j 2^{k-i-1-j} \right) cum(2^{k-i}-1)}{cum_{150}(2^k-1)}\\
&= \frac{\sum_{i=1}^k x_i \ ( a M_{x_0} M_{x_1} \cdots M_{x_{i-1}} u_0 ) \left( \sqrt{5} \left((1+\sqrt{5})^{k+2-i}-(1-\sqrt{5})^{k+2-i}\right)/20 \right)}{\sqrt{5} \left((1+\sqrt{5})^{k+2}- (1-\sqrt{5})^{k+2}\right)/20}\\
&= \sum_{i=1}^k x_i \, r(x)_i \left( \frac{\alpha^i}{1-((\sqrt{5}-3)/2)^{k+2}} - \frac{(1-\sqrt{5})^{-i}}{(-(\sqrt{5}+3)/2)^{k+2}-1} \right)\\
&= \left( \frac{1}{1-((\sqrt{5}-3)/2)^{k+2}} \sum_{i=1}^k x_i \, r(x)_i \alpha^i \right) \nonumber \\
& \quad \qquad - \left( \frac{(5/2)^k}{(-(\sqrt{5}+3)/2)^{k+2}-1} \sum_{i=1}^k x_i \, r(x)_i \left(-\frac{\sqrt{5}+1}{10} \right)^i \left( \frac{2}{5} \right)^{k-i} \right). \label{eq:conv}
\end{align}

For the first term of Equation~(\ref{eq:conv}) we have $x_i \, r(x)_i \alpha^i \leq 3^{i-1} \alpha^i$ for any $i > 0$.
As $\lim_{i \to \infty} \left| (3^i \alpha^{i+1})/(3^{i-1}\alpha^i)\right| = 3 \alpha < 1$, the infinite series $\sum_{i=1}^{\infty} 3^{i-1} \alpha^i$ absolutely converges.
Thus, $\sum_{i=1}^{\infty} x_i r(x)_i \alpha^i$ also absolutely converges.
For the second terms of Equation~(\ref{eq:conv})
for any $i>0$ we have $x_i \, r(x)_i \left(-(\sqrt{5}+1)/10 \right)^i \left( 2/5 \right)^{k-i} \leq 3^{i-1} \left(-(\sqrt{5}+1)/10 \right)^i$.
Since $\lim_{i \to \infty} \left| (3^i (-(\sqrt{5}+1)/10)^{i+1})/(3^{i-1} (-(\sqrt{5}+1)/10)^i) \right| = \left| 3 (-(\sqrt{5}+1)/10) \right| < 1$, 
$\sum_{i=1}^{\infty} 3^{i-1} (-(\sqrt{5}+1)/10)^i)^i$ absolutely converges, and also $\sum_{i=1}^{\infty} x_i \, r(x)_i \left(-(\sqrt{5}+1)/10 \right)^i \left( 2/5 \right)^{k-i}$ absolutely converges.
We note that $\lim_{k \to \infty} 1/(1-((\sqrt{5}-3)/2)^{k+2}) = 1$, and $\lim_{k \to \infty} (5/2)^k / ((-(\sqrt{5}+3)/2)^{k+2}-1)=0$.
Therefore, we obtain the following result.

\begin{align}
\lim_{k \to \infty} F_k (x) 
&= \left( 1 \cdot \sum_{i=1}^{\infty} x_i \, r(x)_i \alpha^i \right) - \left( 0 \cdot  \sum_{i=1}^k x_i \, r(x)_i \left(-\frac{\sqrt{5}+1}{10} \right)^i \left( \frac{2}{5} \right)^{k-i} \right)\\
&=\sum_{i=1}^{\infty} x_i \, r(x)_i \alpha^i.
\end{align}

\end{proof}

\begin{cor}
\label{cor:fx}
By Proposition~\ref{prop:num} for $x = \sum_{i=1}^{\infty} (x_i/2^i) \in [0,1]$
we have $N_i$ and $k_j$ $(j=1, \ldots, N_i)$ such that 
$a M_{x_{i-1}} M_{x_{i-2}} \cdots M_{x_2} M_{x_1} M_{x_0} u_0 = \prod_{j=1}^{N_i} (a M_1^{k_j} u_0)$. 
Because $r(x)_i$ is represented by $\prod_{j=1}^{N_i} (a M_1^{k_j} u_0)$,
the function $F$ is also given by
\begin{align}
F(x) = \sum_{i=1}^{\infty} \left( x_i \alpha^i \prod_{j=1}^{N_i} \frac{2^{k_j+2} + (-1)^{k_j+1}}{3} \right).
\end{align}
\end{cor}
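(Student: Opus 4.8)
The plan is to read this off directly from Theorem~\ref{thm:fx} together with the factorization already contained in Proposition~\ref{prop:num}. Theorem~\ref{thm:fx} gives $F(x) = \sum_{i=1}^{\infty} x_i\, r(x)_i\, \alpha^i$ with $r(x)_i = a M_{x_{i-1}} \cdots M_{x_1} M_{x_0} u_0$, which is a scalar. So the only thing left to prove is that each scalar $r(x)_i$ factors as $\prod_{j=1}^{N_i} (a M_1^{k_j} u_0)$, after which substituting the closed form $a M_1^{k_j} u_0 = (2^{k_j+2}+(-1)^{k_j+1})/3$ from Proposition~\ref{prop:num} and pulling it inside the sum yields the claimed expression. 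Convergence of the resulting series is inherited verbatim from the proof of Theorem~\ref{thm:fx}, so no new analytic estimate is needed.

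The engine of the factorization is the rank-one structure of $M_0$. First I would record the two elementary identities $M_0 = u_0\, a$ and $a\, u_0 = 1$, immediate from $a = (1\ 0)$ and $u_0 = (1\ 1)^{\mathsf T}$; equivalently $a M_0 = a$ and $M_0 u_0 = u_0$. Writing the digit word that indexes $r(x)_i$ as $d_1 d_2 \cdots d_m$ over $\{0,1\}$, I would replace every occurrence of $M_0$ in $a M_{d_1} \cdots M_{d_m} u_0$ by $u_0\, a$. Each such substitution splits the matrix product into two bracketed scalars, so the whole expression collapses into an ordered product of blocks, one block $a M_1^{k} u_0$ for each maximal run of consecutive $M_1$'s of length $k$, together with factors $a u_0 = 1$ absorbing the zeros. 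Reading $N_i$ as the number of maximal runs of $1$s in the word and $k_1, \dots, k_{N_i}$ as their lengths gives exactly $r(x)_i = \prod_{j=1}^{N_i} (a M_1^{k_j} u_0)$. This is the same regrouping that turns $a M_{n_{l-1}} \cdots M_{n_0} u_0$ into $\prod_r (a M_1^r u_0)^{p_r}$ in Proposition~\ref{prop:num}, now listed run-by-run rather than collected by run length.

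The only genuine care is bookkeeping at the ends of the word and at consecutive zeros: I would check that a leading run of $1$s is captured by the initial $a$, a trailing run by the final $u_0$, and that blocks of zeros contribute only the harmless factor $a u_0 = 1$ (equivalently, the $r=0$ term $a M_1^0 u_0 = (2^2-1)/3 = 1$). These are routine once $M_0 = u_0\, a$ is in hand, and I expect this edge-case check, rather than any deeper point, to be the main and essentially only obstacle. Substituting the closed form for each block and reinserting into $F(x) = \sum_i x_i\, r(x)_i\, \alpha^i$ then gives $F(x) = \sum_{i=1}^{\infty} \bigl( x_i \alpha^i \prod_{j=1}^{N_i} (2^{k_j+2}+(-1)^{k_j+1})/3 \bigr)$, as required.
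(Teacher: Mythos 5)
Your proposal is correct and follows essentially the same route as the paper, which treats the corollary as immediate: substitute the cluster factorization of $a M_{x_{i-1}} \cdots M_{x_0} u_0$ from Proposition~\ref{prop:num} into the formula $F(x)=\sum_i x_i\, r(x)_i\, \alpha^i$ of Theorem~\ref{thm:fx}, with convergence unchanged since the series is rewritten term by term. Your explicit verification of that factorization via the rank-one identities $M_0 = u_0\, a$ and $a u_0 = 1$ is a nice self-contained justification of a step the paper simply cites from Proposition~\ref{prop:num}, but it is detail-filling rather than a different approach.
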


\begin{rmk}
\label{rmk:01}
We verify that $F(0)=0$ and $F(1)=1$.
By the definition of $F$ 
\begin{align}
F(0) &= F \left( \sum_{i=1}^{\infty} \frac{0}{2^i} \right) = 0,\\
F(1) &= F \left( \sum_{i=1}^{\infty} \frac{1}{2^i} \right) 
= \sum_{i=1}^{\infty} r(x)_i \alpha^i = \sum_{i=1}^{\infty} b_i \alpha^i =1,
\end{align}
where $b_i := a M_1^{i-1} u_0 = (2^{i+1}+(-1)^i)/3$.
\end{rmk}

\begin{rmk}
The binary expansion of $x$ is unique except for dyadic rationals 
$x= m/2^i$, which have two possible expansions. 
We check that the definition of $F$ is consistent for the numbers 
having two binary expansions.
Let $x = \sum_{i=1}^k (x_i / 2^i) + 1 / 2^{k+1}$ and 
$y = \sum_{i=1}^k ( x_i / 2^i) + \sum_{i=k+2}^{\infty} (1/2^i)$ for $x_i \in \{0,1\}$ and $k \in {\mathbb Z}_{>0}$. Thus, $x=y$.
Here we confirm that $F(x)=F(y)$.
By the definition of $F$
\begin{align}
F(x) &= F \left( \sum_{i=1}^k \frac{x_i}{2^i} + \frac{1}{2^{k+1}} \right) 
= \left(\sum_{i=1}^k x_i r(x)_i \alpha^i \right) + r(x)_{k+1} \alpha^{k+1},\\
F(y) &= F \left( \sum_{i=1}^k \frac{x_i}{2^i} + \sum_{i=k+2}^{\infty} \frac{1}{2^i}\right) 
= \left( \sum_{i=1}^{k} x_i r(x)_i \alpha^i \right) + \sum_{i=k+2}^{\infty} r(y)_i \alpha^i.
\end{align}
Set $\tilde{M_{x_k}} = M_{x_k} M_{x_{k-1}} \cdots M_{x_1} M_{x_0}$. We have
\begin{align}
F(x) - F(y) 
&= r(x)_{k+1} \alpha^{k+1} - \sum_{i=k+2}^{\infty} r(y)_i \alpha^i\\
&= ( a \tilde{M_{x_k}} u_0 ) \alpha^{k+1} - \sum_{i=k+2}^{\infty} ( a M_1^{i-k-2} M_0 \tilde{M_{x_k}} u_0 )  \alpha^i\\
&= ( a \tilde{M_{x_k}} u_0 ) \alpha^{k+1} - \sum_{i=k+2}^{\infty} ( a M_1^{i-k-2} u_0)(a \tilde{M_{x_k}} u_0 )  \alpha^i\\
&= ( a \tilde{M_{x_k}} u_0 ) \alpha^{k+1} \left( 1 - \sum_{i=1}^{\infty} ( a M_1^{i-1} u_0) \alpha^i \right)\\
&= ( a \tilde{M_{x_k}} u_0 ) \alpha^{k+1} \left( 1 - \sum_{i=1}^{\infty} b_i \alpha^i \right) = 0.
\end{align}
\end{rmk}

\subsection{Properties of the function $F$}
\label{subsec:prp_sf}

In this section we describe properties of the function $F$ given in Theorem~\ref{thm:fx}.

\begin{thm}
\label{thm:prop}
The function $F$ on $[0, 1]$ holds the following properties.
\begin{enumerate}
\item $F$ is strictly increasing,
\item $F$ is uniformly continuous,
\item $F$ is differentiable with derivative zero almost everywhere, and
\item $F$ is a singular function. 
\end{enumerate}
\end{thm}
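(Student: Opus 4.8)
The plan is to treat all four assertions through a single object: the increment of $F$ across a dyadic interval. For a finite binary word $s=x_1x_2\cdots x_n$ I set $w_s=M_{x_n}M_{x_{n-1}}\cdots M_{x_1}u_0$ and record the closed form
\begin{align}
\Delta_s := F\!\left(\tfrac{p+1}{2^n}\right)-F\!\left(\tfrac{p}{2^n}\right) = \alpha^{n}\,c^{\mathsf T}w_s,\qquad c^{\mathsf T}=(2\alpha,\,4\alpha^2),
\end{align}
with $p=\sum_{i=1}^n x_i2^{n-i}$; this follows from Theorem~\ref{thm:fx} by expanding the right endpoint as $x_1\cdots x_n111\cdots$ and the left as $x_1\cdots x_n000\cdots$, using $M_0u_0=u_0$ and the Neumann series $\sum_{j\ge1}M_1^{\,j-1}\alpha^{j}=\alpha(I-\alpha M_1)^{-1}$ (the spectral radius of $\alpha M_1$ is $2\alpha<1$). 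Two facts drive everything. First, $w_s$ has strictly positive entries (induction on $s$, since $M_0\binom pq=\binom pp$ and $M_1\binom pq=\binom{p+2q}{p}$ preserve positivity) and $c^{\mathsf T}$ is positive, so $\Delta_s>0$. Second, because $c^{\mathsf T}$ is the left eigenvector of $M_0+M_1=\bigl(\begin{smallmatrix}2&2\\2&0\end{smallmatrix}\bigr)$ for the eigenvalue $\alpha^{-1}=1+\sqrt5$, the children satisfy $\Delta_{s0}+\Delta_{s1}=\Delta_s$, so $\{\Delta_s\}$ is exactly the distribution of the Lebesgue--Stieltjes measure $dF$ over dyadic cells.

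With this in hand I would dispatch (1) and (2) quickly. Weak monotonicity is inherited from $F_k$: each $F_k$ is a ratio of values of $cum_{150}$, which is nondecreasing (indeed strictly, as $num_{150}(n)\ge1$), evaluated at the truncation $\lfloor 2^kx\rfloor$, so $F_k(x)\le F_k(y)$ for $x\le y$ and the inequality passes to the limit. Strict monotonicity then follows from $\Delta_s>0$: given $x<y$, choose a dyadic cell inside $[x,y]$ and bound $F(y)-F(x)$ below by its positive increment. For (2), since $[0,1]$ is compact it suffices to prove continuity, and the oscillation of the monotone $F$ on a level-$n$ cell is exactly $\Delta_s$; the bound $r(x)_i\le 3^{i-1}$ established in the proof of Theorem~\ref{thm:fx} gives $\Delta_s\le\sum_{i>n}3^{i-1}\alpha^{i}\le C(3\alpha)^{n}\to0$ uniformly in $s$ (here $3\alpha<1$), which is uniform continuity.

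The heart of the argument is (3). As $F$ is monotone it is differentiable a.e.\ by Lebesgue's theorem, and wherever $F'(x)$ exists it equals $\lim_n\psi_n(x)$ with $\psi_n(x):=2^n\Delta_{s_n(x)}$, the dyadic quotient on the cells $I_n(x)\ni x$ (the two-point quotient is a convex combination of the one-sided ones). So I would show $\psi_n\to0$ for a.e.\ $x$. Writing $\psi_n=\prod_{j=1}^n 2\theta_j$ with splitting fractions $\theta_j=\Delta_{s_j}/\Delta_{s_{j-1}}\in(0,1)$, a short computation using $M_0w_s=p\,u_0$ gives the explicit form $\theta^{(0)}=\tfrac1{2+4\alpha t}$ for the ``$0$''-child, where $t=(w_s)_2/(w_s)_1$, while the ``$1$''-child gets $\theta^{(1)}=1-\theta^{(0)}$. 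The key lemma is that the scalar state $t$ stays in the compact interval $[1/3,1]$: it starts at $t=1$, the digit $0$ resets it to $1$, and the digit $1$ sends $t\mapsto 1/(1+2t)$, mapping $[1/3,1]$ into $[1/3,3/5]$. Hence $\theta^{(0)}\in[\,1/(1+\sqrt5),\,3/(5+\sqrt5)\,]\subset[0.30,0.42]$ is bounded away from $\tfrac12$ over \emph{all} prefixes, so $4\theta^{(0)}\theta^{(1)}\le\kappa<1$ for a fixed $\kappa$. Averaging over the fair next digit gives the uniform negative drift $\mathbb E[\log(2\theta_j)\mid x_1,\dots,x_{j-1}]=\tfrac12\log\!\bigl(4\theta^{(0)}\theta^{(1)}\bigr)\le\tfrac12\log\kappa<0$. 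Since the increments $\log(2\theta_j)$ are bounded, $\log\psi_n$ is a bounded-increment process with uniformly negative conditional means, and a martingale strong law (or Azuma plus Borel--Cantelli) yields $\tfrac1n\log\psi_n\to$ a negative limit a.e., hence $\psi_n\to0$ a.e., i.e.\ $F'=0$ a.e. Assertion (4) is then immediate: $F$ is increasing, continuous, has zero derivative a.e., and is non-constant ($F(0)=0\neq1=F(1)$ by Remark~\ref{rmk:01}), so it is singular.

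I expect the main obstacle to be exactly the a.e.\ vanishing in (3). Naive or mean-based bounds must fail, because the splitting identity forces $\int_0^1\psi_n\,dx=\sum_{|s|=n}\Delta_s=1$ for every $n$: the mass does not tend to $0$ in mean, it escapes to a measure-zero set where $F'=\infty$. The whole difficulty is therefore converted into the deterministic statement that the split is never even, quantified by the invariant interval $t\in[1/3,1]$. Establishing that $[1/3,1]$ is forward-invariant under both digit maps (and contains the initial state $t=1$) is the linchpin that upgrades the AM--GM bound $4\theta^{(0)}\theta^{(1)}\le1$ into a \emph{strict}, uniform negative drift.
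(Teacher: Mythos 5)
Your proposal is correct, and two of its four parts coincide with the paper's proof: for (ii) you use exactly the paper's bound $r(x)_i\le 3^{i-1}$ with $3\alpha<1$ to control the oscillation over dyadic cells, and (iv) is the same formal deduction. The differences are in (i) and, more substantially, (iii). For (i) the paper performs a three-case analysis of binary expansions to show $F(y)-F(x)>0$ directly from the series; your route (weak monotonicity inherited from the monotone approximants $F_k$, upgraded to strict monotonicity by positivity of the increments $\Delta_s=\alpha^{n}c^{\mathsf T}w_s$) is cleaner, and your closed form checks out: indeed $a\,\alpha(I-\alpha M_1)^{-1}=(2\alpha,4\alpha^2)$, $c^{\mathsf T}u_0=2\alpha+4\alpha^2=1$, and $c^{\mathsf T}(M_0+M_1)=\alpha^{-1}c^{\mathsf T}$, so your $\Delta_s$ agrees with the paper's increment $r(x)_k\alpha^k$. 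For (iii) the approaches genuinely diverge. The paper argues deterministically: it computes the ratio of consecutive dyadic difference quotients, which is always $2\alpha$, $6\alpha$, or $D_l\in[10\alpha/3,\,22\alpha/5]$, all bounded away from $1$; since at a point of differentiability consecutive quotients $Q_k$ must satisfy $Q_{k+1}-Q_k\to 0$ while $|Q_{k+1}-Q_k|\ge c\,Q_k$, the limit (hence the derivative) is forced to be $0$ at \emph{every} point where $F'$ exists. You instead prove the a.e.\ statement probabilistically: the forward-invariance of $t\in[1/3,1]$ bounds the splitting fraction $\theta^{(0)}=1/(2+4\alpha t)$ away from $1/2$, giving a uniform negative conditional drift for $\log\psi_n$, and Azuma plus Borel--Cantelli yields $\psi_n\to 0$ a.e. Both are valid; the paper's version is more elementary (no martingale theory) and pointwise stronger, while yours makes transparent why naive mean bounds fail ($\int_0^1\psi_n\,dx=1$ for all $n$) and where the mass escapes. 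It is worth noting that your own invariant-interval lemma already implies the paper's hypothesis and would let you skip the probabilistic step entirely: from $\theta^{(0)}\in[1/(1+\sqrt5),\,3/(5+\sqrt5)]$ you get $2\theta^{(0)}\le 6/(5+\sqrt5)<1$ and $2\theta^{(1)}\ge 2-6/(5+\sqrt5)>1$, so the consecutive-quotient ratios $\psi_{n+1}/\psi_n$ are uniformly bounded away from $1$, and the deterministic contradiction argument finishes the proof at every differentiability point.
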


\begin{proof}[Proof of Theorem~\ref{thm:prop} (i)]
Assuming that $y > x$, we have $k \in {\mathbb Z}_{\geq 0}$ such that $y_i=x_i$ for $\forall i \leq k$, $y_{k+1}=1$, and $x_{k+1}=0$.

We consider the following three cases 
(excepting the case of $x=\sum_{i=1}^k (x_i/2^i) + \sum_{i=k+2}^{\infty} (1/2^i)$ and
$y=\sum_{i=1}^k (x_i/2^i) + 1/2^{k+1}$, because it means $x=y$).

\begin{enumerate}
\item[(a)] Suppose that 
$x=\sum_{i=1}^k (x_i/2^i) + \sum_{i=k+2}^{\infty} (1/2^i)$ and 
$y= (\sum_{i=1}^k (x_i/2^i)) + 1/2^{k+1} + (\sum_{i=k+2}^{\infty} (y_i/2^i))$, 
where $\sum_{i=k+2}^{\infty} y_i > 0$.

By the definition $x$ is represented by $\sum_{i=1}^k (x_i/2^i) + 1/2^{k+1}$.
Let $l = \max \{ i \mid x_j = y_j \mbox{ for $\forall j \leq i$}\}$.
The number $l$ always exists, because $y \neq x$, and 
the following inequality is obtained.
\begin{align}
F(y) - F(x) &= \sum_{i=l+1}^{\infty} y_i r(y)_i \alpha^i \geq r(y)_{l+1} \alpha^{l+1} > 0,
\end{align}
since $\sum_{i=k+2}^{\infty} y_i > 0$.

\item[(b)] Suppose that 
$x=\sum_{i=1}^k (x_i/2^i) + \sum_{i=k+2}^{\infty} (x_i/2^i)$ and 
$y=\sum_{i=1}^k (x_i/2^i) + 1/2^{k+1}$, 
where $\prod_{i=k+2}^{\infty} x_i = 0$.

By the definition $y$ is represented by $\sum_{i=1}^k (x_i/2^i) + \sum_{i=k+2}^{\infty} (1/2^i)$.
Since $\prod_{i=k+2}^{\infty} x_i = 0$, we have the following inequality.
\begin{align}
F(y) - F(x) &= \left( \sum_{i=k+2}^{\infty} r(y)_i \alpha^i \right) - 
\left( \sum_{i=k+2}^{\infty} x_i r(x)_i \alpha^i \right)\\
&= \left( \sum_{i=1}^{\infty} r(y)_{i+k+1} \alpha^{i+k+1} \right) - 
\left( \sum_{i=1}^{\infty} x_{i+k+1} r(x)_{i+k+1} \alpha^{i+k+1} \right)\\
&= \left( r(x)_{k+1} \alpha^{k+1} \sum_{i=1}^{\infty} b_i \alpha^i \right) - 
\left( r(x)_{k+1} \alpha^{k+1} \sum_{i=1}^{\infty} x_{i+k+1} r(x)_i \alpha^i \right)\\
&= r(x)_{k+1} \alpha^{k+1} \left( 1 - \sum_{i=1}^{\infty} x_{i+k+1} r(x)_i \alpha^i \right) > 0.
\end{align}

\item[(c)] Suppose that 
$x=\sum_{i=1}^k (x_i/2^i) + \sum_{i=k+2}^{\infty} (x_i/2^i)$ and 
$y=(\sum_{i=1}^k (x_i/2^i)) + 1/2^{k+1} + (\sum_{i=k+2}^{\infty} (y_i/2^i))$, 
where $\prod_{i=k+2}^{\infty} x_i = 0$ and $\sum_{i=k+2}^{\infty} y_i > 0$.

Let $m = \min \{ i \mid y_i = 1, i > k+1 \}$. 
Thus, we have
\begin{align}
F(y) - F(x) &= \left( \sum_{i=k+1}^{\infty} y_i r(y)_i \alpha^i \right) - 
\left( \sum_{i=k+2}^{\infty} x_i r(x)_i \alpha^i \right)\\
&= r(x)_{k+1} \alpha^{k+1} + \sum_{i=k+2}^{\infty} ( y_i r(y)_i - x_i r(x)_i ) \alpha^i\\
&> r(x)_{k+1} \alpha^{k+1} + r(y)_m \alpha^m - \sum_{i=k+2}^{\infty} r(x)_i \alpha^i \label{eq:ceq}\\
&= r(x)_{k+1} \alpha^{k+1} + r(y)_m \alpha^m - r(x)_{k+1} \alpha^{k+1} \sum_{i=1}^{\infty} b_i \alpha^i\\
&= r(y)_m \alpha^m > 0.
\end{align}
The inequality~(\ref{eq:ceq}) is satisfied, because $\prod_{i=k+2}^{\infty} x_i = 0$.
\end{enumerate}
Therefore, if $y>x$, then $F(y) > F(x)$.
\end{proof}

\begin{proof}[Proof of Theorem~\ref{thm:prop} (ii)]
Let 
$x = \sum_{i=1}^{\infty} (x_i/2^i) \in [0,1]$,
$y = \sum_{i=1}^{\infty} (y_i/2^i) \in [0,1]$ ($x_i, y_i \in \{0,1\}$), 
and $x \neq y$.
We have $k \in {\mathbb Z}_{\geq 0}$ such that $x_i = y_i$ for $\forall i \leq k$.
Hence, 
\begin{align}
| y - x | \leq \frac{1}{2^k} < (3 \alpha)^k =: \frac{1-3\alpha}{\alpha} \epsilon. 
\end{align}
Without loss of generality, we assume that $y > x$. 
\begin{align}
| F(y) - F(x) | &= \left| \sum_{n=k+1}^{\infty} ( y_n r(y)_n - x_n r(x)_n ) \alpha^n \right|\\
&\leq \sum_{n=k+1}^{\infty} r(y)_n \alpha^n\\
&< \sum_{n=k+1}^{\infty} 3^{n-1} \alpha^n\\
&= \frac{\alpha}{1-3 \alpha} (3 \alpha)^k = \epsilon.
\end{align}
Since $F$ is a function on a finite bounded section $[0,1]$, $F$ is uniformly continuous.
\end{proof}

\begin{proof}[Proof of Theorem~\ref{thm:prop} (iii)]
The function $F$ is bounded variation, because $F$ is strictly increasing by Theorem~\ref{thm:prop} (i).
Hence, $F$ is differentiable almost everywhere on $[0,1]$ (e.g., \cite[Theorem~6.3.3]{cohn2013}).

Suppose that $x \in [0,1]$ is a differentiable point.
For any $k > 1$ we have $(y_1, \ldots, y_{k-1})$ such that 
$\sum_{i=1}^{k-1} y_i/2^i \leq x \leq ( \sum_{i=1}^{k-1} y_i/2^i ) + 1/2^k$, and
\begin{align}
\label{eq:dift}
\frac{F \left( \left( \sum_{i=1}^{k-1} (y_i/2^i) \right) + (1/2^k) \right) - F \left( \sum_{i=1}^{k-1} (y_i/2^i) \right)}{2^{-k}} 
&= 2^k r(x)_k \alpha^k.
\end{align}
Assuming that the derivative at $x$ is not zero, 
the derivative is finite and positive because $F$ is strictly increasing. 

When $y_k=0$,
\begin{align}
\label{eq:dl1}
\frac{2^{k+1} r(x)_{k+1} \alpha^{k+1}}{2^k r(x)_k \alpha^k} 
= \frac{2^{k+1} r(x)_k \alpha^{k+1}}{2^k r(x)_k \alpha^k}
= 2 \alpha.
\end{align}
When $y_{k-1}=0$ and $y_k=1$,
\begin{align}
\label{eq:dl2}
\frac{2^{k+1} r(x)_{k+1} \alpha^{k+1}}{2^k r(x)_k \alpha^k} 
= \frac{2^{k+1} ( 3 r(x)_k ) \alpha^{k+1}}{2^k r(x)_k \alpha^k} 
= 6 \alpha.
\end{align}
When $y_{k-1}=1$, $y_k=1$, and $l \geq 2$, 
where $l$ is the length of continuous $1$s including $y_k$,
\begin{align}
\frac{2^{k+1} r(x)_{k+1} \alpha^{k+1}}{2^k r(x)_k \alpha^k} 
&= \frac{2 \alpha ( a M_1^l u_0)}{a M_1^{l-1} u_0} 
= \frac{2 \alpha ( 2^{l+2}+(-1)^{l+1})}{2^{l+1}+(-1)^l} =: D_l.
\end{align}
By a simple calculation we have $\min_{l \geq 2} D_l = D_2 = 10 \alpha /3$, $\max_{l \geq 2} D_l = D_3 = 22 \alpha /5$.
Hence,
\begin{align}
\label{eq:dl3}
\frac{10 \alpha}{3} \leq \frac{2^{k+1} r(x)_{k+1} \alpha^{k+1}}{2^k r(x)_k \alpha^k} \leq \frac{22 \alpha}{5}.
\end{align}

On the other hand, because $F$ is differentiable at $x$ by Equation~(\ref{eq:dift}),
\begin{align}
& \lim_{k \to \infty} \left( ( 2^{k+1} r(x)_{k+1} \alpha^{k+1}) - (2^k r(x)_k \alpha^k) \right)
= \lim_{k \to \infty} (K_{y_{k-1}, y_k} \alpha - 1) (2^k r(x)_k \alpha^k) = 0,
\end{align}
where $K_{y_{k-1}, y_k}$ is $2$, $6$, or $10/3 \leq K_{y_{k-1}, y_k} \leq 22/5$ 
by Equations~(\ref{eq:dl1}), (\ref{eq:dl2}), and (\ref{eq:dl3}).
For any $k$ $K_{y_{k-1}, y_k} \alpha - 1 \neq 0$, and 
$\lim_{k \to \infty} (2^k r(x)_k \alpha^k)$ should be zero.

It contradicts the assumption.
We conclude that the derivative at $x$ is zero when $F$ is differentiable at $x$.
\end{proof}

\begin{proof}[Proof of Theorem~\ref{thm:prop} (iv)]
By properties of $F$ in Theorem~\ref{thm:prop} (i), (ii), and (iii)
it means that the function $F$ is a singular function.
\end{proof}

Next, we provide non-differentiable points for $F$.
If $x \in (0,1)$ is a dyadic rational $m/2^i$ 
for some $m, i \in {\mathbb Z}_{>0}$, 
$x$ is represented by a finite binary fraction. 

\begin{prop}
\label{prop:nondiff}
If $x \in (0,1)$ is represented by a finite binary fraction 
$( \sum_{i=1}^{k-1} (x_i/2^i) ) + 1/2^k$ for some $k \in {\mathbb Z}_{> 0}$, 
then $F$ is not differentiable at $x$.
\end{prop}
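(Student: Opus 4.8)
The plan is to prove non-differentiability by exhibiting a single sequence of difference quotients, approaching $x$ from the left, that diverges to $+\infty$; since a finite derivative would force every difference quotient to converge to it, this already rules out differentiability at $x$. I would work with the terminating expansion $x = (\sum_{i=1}^{k-1} x_i/2^i) + 1/2^k$, so that $x_k = 1$ and $x_i = 0$ for $i > k$, and approach from below through the points $z_n := x - 2^{-n}$ for $n > k$. The carry in the subtraction turns the isolated $1$ at position $k$ into a run of $1$s: $z_n$ has digits $x_1, \ldots, x_{k-1}, 0, \underbrace{1, \ldots, 1}_{n-k}, 0, 0, \ldots$. This run of $1$s immediately to the left of $x$ is exactly the feature responsible for the blow-up.

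The main computation is to evaluate $F(x) - F(z_n)$ in closed form. Using the series of Theorem~\ref{thm:fx}, the digits of $x$ and $z_n$ agree up to index $k-1$, so the corresponding terms cancel, and what remains is the term $r(x)_k\alpha^k$ coming from $x_k = 1$ minus the block $\sum_{i=k+1}^{n} r(z_n)_i\alpha^i$ coming from the run of $1$s in $z_n$. Here the key algebraic step is the identity $M_0 v = (a v)\,u_0$ for any column vector $v$, which collapses the matrix product at the position of the digit $0$ (index $k$) and lets me factor $r(z_n)_i = r(x)_k\,b_{i-k}$ for $k+1 \le i \le n$, where $b_j = a M_1^{j-1} u_0$. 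Summing the resulting geometric block and invoking the normalization $\sum_{j=1}^{\infty} b_j\alpha^j = 1$ from Remark~\ref{rmk:01} telescopes the expression into a tail, giving
\begin{align}
F(x) - F(z_n) = r(x)_k\,\alpha^k \sum_{j=n-k+1}^{\infty} b_j\,\alpha^j.
\end{align}

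From here the conclusion is a size estimate. Bounding the tail below by its first term and using $b_m > 2^m/3$ gives $\sum_{j=n-k+1}^{\infty} b_j\alpha^j > (2\alpha)^{n-k+1}/3$, so the left difference quotient satisfies
\begin{align}
\frac{F(x)-F(z_n)}{2^{-n}} = 2^n r(x)_k\,\alpha^k \sum_{j=n-k+1}^{\infty} b_j\,\alpha^j > \frac{r(x)_k\,\alpha^k(2\alpha)^{1-k}}{3}\,(4\alpha)^n.
\end{align}
Since $4\alpha = \sqrt{5}-1 > 1$ and $r(x)_k > 0$, the right-hand side tends to $+\infty$, so the difference quotients along $z_n \to x^-$ diverge and $F$ cannot be differentiable at $x$.

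I expect the bookkeeping of the binary expansions and the factoring identity $r(z_n)_i = r(x)_k\,b_{i-k}$ to be the only real obstacle; once the run of $1$s is correctly identified and collapsed through $M_0 v = (a v) u_0$, the normalization from Remark~\ref{rmk:01} does the rest. It is worth contrasting this with the approach from the right through $x + 2^{-n}$, where the trailing $0$s keep $r(\cdot)_n$ constant and hence produce difference quotients of order $(2\alpha)^n \to 0$; the mismatch between the run of $1$s on the left and the run of $0$s on the right is the structural reason for non-differentiability, and also explains why dyadic points are the exceptional ones.
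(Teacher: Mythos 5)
Your proposal is correct and takes essentially the same approach as the paper: the paper's proof uses the identical left-hand sequence $y_m = x - 2^{-m}$, the same factorization $r(y_m)_i = r(x)_k\, b_{i-k}$ collapsing the run of $1$s, and the same tail identity $F(x)-F(y_m) = r(x)_k \alpha^k \sum_{j=m-k+1}^{\infty} b_j \alpha^j$, yielding difference quotients of order $(4\alpha)^m \to +\infty$. The only difference is that the paper also carries out formally the right-hand computation (which you relegate to a closing remark), showing those quotients tend to $0$; this is not needed under the standard finite-derivative definition of differentiability, so your shorter argument is complete.
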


\begin{proof}
Let $y_m = \sum_{i=1}^{k-1} (x_i/2^i) + \sum_{i=k+1}^m (1/2^i)$ for $m > k$ and $x_i \in \{0,1\}$.
\begin{align}
\frac{F(x) - F(y_m)}{x - y_m} 
&= \frac{\sum_{i=m+1}^{\infty} r(x)_i \alpha^i}{\sum_{i=m+1}^{\infty} \frac{1}{2^i}}\\
&= 2^m r(x)_k \alpha^k \sum_{i=m-k+1}^{\infty} b_i \alpha^i\\
&= \frac{r(x)_k \alpha^k}{3} \left( \frac{2 (4 \alpha)^m}{(2 \alpha)^{k-1} (1-2 \alpha)} + \frac{(-2 \alpha)^m}{(- \alpha)^{k-1}(1+\alpha)} \right)\\
& \to + \infty \quad (m \to \infty).
\end{align}

Let $z_m = ( \sum_{i=1}^{k-1} (x_i/2^i) ) + 1/2^k 
+ ( \sum_{i=m}^{\infty} (1/2^i) )$ for $m \geq k+2$.
On the other hand,
\begin{align}
\frac{F(z_m) - F(x)}{z_m - x} &= 2^{m-1} r(z_m)_{m-1} \alpha^{m-1}\\
&= r(x)_{k+1} (2 \alpha)^{m-1} \to 0 \quad (m \to \infty).
\end{align}
Hence, $F$ is not differentiable at $x \in (0,1)$.
\end{proof}

\subsection{Functional equations for $F$}
\label{subsec:sfunc_sf}

Lastly, we give functional equations that the singular function $F$ satisfies.
Because of the self-similarity of the limit set of Rule $150$, 
the function $F$ is self-affine satisfying $F(x) = \alpha F(2x)$ for $0 \leq x \leq 1/2$, and 
$F(x) = F(x/2)/\alpha$ for $0 \leq x \leq 1$.
Including this equation, we obtain the following result.

\begin{thm}
\label{thm:sfunc}
\begin{itemize}
\item[(i)] The singular function $F$ satisfies functional equations
\begin{subnumcases}
{F(x) =}
\alpha F(2x) & \mbox{if $\displaystyle{0 \leq x < \frac{1}{2}}$}, \vspace{1mm} \label{eq:sub1} \\
3 F \left(\frac{2x-1}{2} \right) + \alpha & \mbox{if $\displaystyle{\frac{1}{2} \leq x < \frac{3}{4}}$}, \label{eq:sub2}\\
F \left(\frac{2x-1}{2} \right) + 2 F \left(\frac{4x-3}{4} \right) + \alpha + 2 \alpha^2 & \mbox{if $\displaystyle{\frac{3}{4} \leq x \leq 1}$}. \label{eq:sub3} 
\end{subnumcases}
\item[(ii)] The function $F$ is the unique continuous function on $[0,1]$ that satisfies the upper functional equations, (\ref{eq:sub1}), (\ref{eq:sub2}), and  (\ref{eq:sub3}).
\end{itemize}
\end{thm}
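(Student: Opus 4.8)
The plan is to handle the two parts by different mechanisms: the three functional equations in part (i) are verified directly from the closed form (\ref{eq:def}), while the uniqueness in part (ii) follows from a sup-norm contraction applied to the difference of two solutions.

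For part (i), equation (\ref{eq:sub1}) is just the self-affine relation $F(x)=\alpha F(2x)$ on $[0,1/2]$ already recorded before the theorem; from the series it is the digit-shift identity $r(2x)_i=r(x)_{i+1}$ (valid when $x_1=0$) together with $r(x)_1=a\,u_0=1$. For the other two equations I would write $r(x)_i=a\,V_i$ with $V_i=M_{x_{i-1}}\cdots M_{x_0}u_0$, so that $V_1=u_0$ and $V_{i+1}=M_{x_i}V_i$, and then isolate how the leading digits act on the tail. Everything rests on the elementary identities $M_0u_0=u_0$, $M_0e_1=M_1e_1=u_0$ (hence $M_su_0$ and $M_se_1$ are easy to track for $s\in\{0,1\}$), and $M_1u_0=u_0+2e_1$, where $e_1=(1\ 0)^{\top}$ and $a e_1=a u_0=1$.

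On $[1/2,3/4)$ one has $x_1=1,\ x_2=0$; setting $g=2x-1$ (with digits $g_m=x_{m+1}$) and using $M_0(u_0+2e_1)=3u_0$ gives the transfer identity $r(x)_i=3\,r(g)_{i-1}$ for $i\ge 2$, which after reindexing the series and invoking $3\alpha F(g)=3F((2x-1)/2)$ yields (\ref{eq:sub2}). On $[3/4,1]$ one has $x_1=x_2=1$; splitting $M_1u_0=u_0+2e_1$ and pushing the $e_1$-part through $M_{x_2}$ produces $r(x)_i=r(g)_{i-1}+2\,r(h)_{i-2}$, where $h=4x-3$ has digits $h_m=x_{m+2}$. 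Substituting into the series and applying the scalings $\alpha F(g)=F((2x-1)/2)$ and $\alpha^2F(h)=F((4x-3)/4)$ gives (\ref{eq:sub3}); the constant $\alpha+2\alpha^2$ arises because the leading digits $x_1=x_2=1$ contribute the fixed terms $\alpha$ and $2\alpha^2$, the latter from the $m=0$ term $x_2\,r(h)_0=1$ in the shifted $h$-sum. I expect the index and boundary bookkeeping to be the main obstacle, so I would verify each transfer identity separately at the first index $i=2$ using $r(x)_2=a M_1 u_0=3$.

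For part (ii), let $G$ be any continuous solution on $[0,1]$ and put $H=F-G$. The additive constants cancel, so $H$ satisfies the homogeneous relations $H(x)=\alpha H(2x)$ on $[0,1/2)$, $H(x)=3H((2x-1)/2)$ on $[1/2,3/4)$, and $H(x)=H((2x-1)/2)+2H((4x-3)/4)$ on $[3/4,1]$. Let $M=\max_{[0,1]}|H|$, finite since $H$ is continuous on a compact interval. The first relation gives $|H|\le\alpha M$ on $[0,1/2]$ (the endpoint by continuity), and since the arguments appearing in the other two relations all lie in $[0,1/2]$, feeding this bound in yields $|H|\le 3\alpha M$ on $[1/2,1]$. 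Hence $M\le 3\alpha M$, and because $3\alpha=3(\sqrt{5}-1)/4<1$ we conclude $M=0$, i.e. $G=F$. The only genuinely load-bearing inequality is $3\alpha<1$: it is exactly the contraction that neutralizes the two-term equation (\ref{eq:sub3}), the coefficients $1$ and $2$ there acting on values already damped by the factor $\alpha$ supplied by (\ref{eq:sub1}).
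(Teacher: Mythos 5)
Your proposal is correct. For part (i) you follow essentially the same route as the paper: both verify the three equations directly from the series $F(x)=\sum_i x_i r(x)_i \alpha^i$ by tracking how the leading digits act on the weights $r(x)_i$. The paper's bookkeeping uses $(M_{x_1}-M_0)u_0=2e_1$ together with the observation that $x_2=0$ absorbs the $e_1$ (for $1/2\le x<3/4$), and the Cayley--Hamilton-type identity $M_1^2u_0=(M_1+2I_2)u_0$ (for $3/4\le x\le 1$), where $e_1$ denotes the first standard basis column vector; you instead split $M_1u_0=u_0+2e_1$ and push the $e_1$ component through $M_{x_2}$, so your transfer identities $r(x)_i=3\,r(g)_{i-1}$ and $r(x)_i=r(g)_{i-1}+2\,r(h)_{i-2}$ are exactly what the paper's displayed computation amounts to; the two bookkeepings are equivalent. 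Part (ii) is where you genuinely diverge. The paper proves uniqueness by showing that the equations determine any continuous solution at every dyadic rational ($F(0)=0$, $F(1/2)=\alpha$, $F(1/2^i)=\alpha^i$, $F(1)=1$, then inductively $F(m/2^i)$) and then invoking density of the dyadics together with continuity. You instead take the difference $H=F-G$ of two continuous solutions, note that the additive constants cancel so $H$ satisfies the homogeneous system, and run a sup-norm contraction: $|H|\le\alpha\|H\|_{\infty}$ on $[0,1/2]$ (endpoint by continuity --- a subtlety you correctly flag, and which matters because $(2x-1)/2=1/2$ occurs at $x=1$ in (\ref{eq:sub3})), hence $|H|\le 3\alpha\|H\|_{\infty}$ on $[1/2,1]$ since every argument on the right-hand sides of (\ref{eq:sub2}) and (\ref{eq:sub3}) lies in $[0,1/2]$, giving $\|H\|_{\infty}\le 3\alpha\|H\|_{\infty}$ and $H\equiv 0$ because $3\alpha<1$. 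Both arguments are sound. Yours is shorter, avoids the induction over odd numerators that the paper only sketches (``iterating the same procedure''), and isolates a quantitative mechanism for uniqueness; its price is reliance on the numerical fact $3\alpha=3(\sqrt{5}-1)/4<1$, whereas the paper's dyadic-determination argument needs no such inequality and, as a by-product, yields the explicit values of $F$ at dyadic points.
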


\begin{proof}[Proof of Theorem~\ref{thm:sfunc} (i)]
Let $x = \sum_{i=1}^{\infty} (x_i/2^i)$.
If $0 \leq x < 1/2$, then $x_1=0$, and we have
$2x = \sum_{i=1}^{\infty} (x_{i+1}/2^i)$.
\begin{align}
\alpha F(2x) &= \alpha \left( \sum_{i=1}^{\infty} x_{i+1} r(2x)_i \alpha^i \right)\\
&= \sum_{i=1}^{\infty} x_{i+1} (a M_{x_i} M_{x_{i-1}} \cdots M_{x_2} u_0) \alpha^{i+1}\\
&= \sum_{i=2}^{\infty} x_i (a M_{x_{i-1}} M_{x_{i-2}} \cdots M_{x_2} u_0) \alpha^i = F(x).
\end{align}

If $1/2 \leq x < 3/4$, then $x_1=1$ and $x_2=0$.
Since $(2x-1)/2 = \sum_{i=3}^{\infty} (x_i/2^i)$,  
we have $F((2x-1)/2) = \sum_{i=3}^{\infty} x_i r((2x-1)/2)_i \alpha^i$.
Thus, 
\begin{align}
F(x) - F \left(\frac{2x-1}{2} \right) 
&= \left( \alpha + \sum_{i=3}^{\infty} x_i r(x)_i \alpha^i \right) - \sum_{i=3}^{\infty} x_i r \left(\frac{2x-1}{2} \right)_i \alpha^i \\
&= \alpha + \sum_{i=3}^{\infty} x_i \left(  a M_{x_{i-1}} \cdots M_{x_2} ( M_{x_1} - M_0) u_0 \right) \alpha^i\\
&=  \alpha + 2 \sum_{i=3}^{\infty} x_i \left(  a M_{x_{i-1}} \cdots M_{x_2} u_0 \right) \alpha^i \label{eq:x20}\\
&=  \alpha + 2 F \left(\frac{2x-1}{2} \right).
\end{align}
Equation~(\ref{eq:x20}) follows directly from $x_2 = 0$.
Hence, we have $F(x) = 3 F((2x-1)/2) + \alpha$.

If $3/4 \leq x \leq 1$, then $x_1=1$ and $x_2=1$.
Since $(2x-1)/2 = \sum_{i=2}^{\infty} (x_i/2^i)$,  
we have $F((2x-1)/2) = \sum_{i=2}^{\infty} x_i r((2x-1)/2)_i \alpha^i$, 
and since $(4x-3)/4 = \sum_{i=3}^{\infty} (x_i/2^i)$,  
we have $F((4x-3)/4) = \sum_{i=3}^{\infty} x_i r((4x-3)/4)_i \alpha^i$.
\begin{align}
F(x) - \alpha - 2 \alpha^2 
&= \left( \alpha + 3 \alpha^2 + \sum_{i=3}^{\infty} x_i r(x)_i \alpha^i \right) -\alpha - 2 \alpha^2 \\
&= \alpha^2 + \sum_{i=3}^{\infty} x_i (a M_{x_{i-1}} \cdots M_{x_3} M_{x_2} M_{x_1} u_0) \alpha^i\\
&= \alpha^2 + \sum_{i=3}^{\infty} x_i (a M_{x_{i-1}} \cdots M_{x_3} (M_1 + 2 I_2) u_0) \alpha^i\\
&= \sum_{i=2}^{\infty} x_i (a M_{x_{i-1}} \cdots M_{x_2} u_0) \alpha^i + 2 \sum_{i=3}^{\infty} x_i (a M_{x_{i-1}} \cdots M_{x_3} u_0) \alpha^i\\
&= F \left(\frac{2x-1}{2} \right) + 2 F \left(\frac{4x-3}{4} \right),
\end{align}
where $I_2 = 
\left( 
\begin{array}{cc}
1 & 0 \\
0 & 1 
\end{array}
\right)$.
\end{proof}

\begin{proof}[Proof of Theorem~\ref{thm:sfunc} (ii)]
The uniqueness is obtained by showing that the functional equations determine the value 
for each dyadic rational on $[0,1]$.

We first obtain $F(0)=0$ by Equation~(\ref{eq:sub1}), and $F(1/2) = \alpha$ by Equation~(\ref{eq:sub2}).
Thus $F(1/2^i) = \alpha^i$ for $i \in {\mathbb Z}_{>0}$ by Equation~(\ref{eq:sub1}), 
and $F(1)=F(1/2^0)=1$ by Equation~(\ref{eq:sub3}).
Calculating $F(3/4)$, we obtain $F(3/2^i)$ for $i \geq 2$, and calculating $F(5/8)$ and $F(7/8)$, we obtain $F(5/2^i)$ and $F(7/2^i)$ for $i \geq 3$.
Accordingly, iterating the same procedure for $i \geq 4$, we determine 
$F(m/2^i)$ for each dyadic rational point $m/2^i$ on $[0,1]$.

Since the dyadic rationals on $[0,1]$ are dense, there is a unique continuous function.
\end{proof}

\section{Conclusions and future works}
\label{sec:cr}

In this paper, a function on the unit interval has given by the dynamic of the one-dimensional elementary cellular automaton Rule $150$.
Since the limit set of Rule $150$ holds a self-similarity, the resulting function is self-affine.
We have shown that the function is strictly increasing, uniformly continuous, and differentiable almost everywhere, that means it is a singular function.
We also have given the functional equations that the singular function satisfies, and we have proven that the function is the only solution of the functional ones.

First future work is to show at which points of $[0,1]$ the singular function $F$ is differentiable.
For some other singular functions their differentiabilities at all points have been revealed.
About the Cantor function, for points outside of the Cantor set the derivative is zero, and for points in the Cantor set except $\{0, 1\}$ the derivative is infinity.
In the case of Salem's function, the differentiability is determined by the parameter of the function and the density of the digit $0$ or $1$ in the binary expansion of $x \in [0,1]$ \cite{kawamura2011}.
By Proposition~\ref{prop:nondiff} we already have shown that at finite binary fraction points in $[0, 1]$ the function $F$ is not differentiable, 
however, we did not mention the differentiability at the other points. 

Second future work is about the relationship between the other cellular automata and singular functions.
The authors already obtained the relationships among Rule $90$, a two-dimensional automaton, and Salem's function \cite{kawanami2014, kawa2014a, kawanami2017, kawanami2019}.
In addition, in this paper we have studied it between Rule $150$ and the new singular function.
Since dynamics of cellular automata often hold self-similarity,
it is expected that there exist some relationship with a function satisfying self-similarity. 
We are going to search for and study the other automata and functions.

\section*{Acknowledgment}
This work is partly supported by the Grant in Aid for Scientific Research 18K13457.\\


\end{document}